\documentclass[10pt]{amsart}
\usepackage{amsmath}
\usepackage{amsfonts}
\usepackage{amssymb}
\usepackage{amsthm}
\usepackage{url}
\usepackage{tikz-cd}
\usepackage{dsfont}
\usepackage{graphicx}
\usepackage{caption}
\usepackage{subcaption}
\usepackage{comment} 
\usepackage{hyperref}
\usepackage{todonotes}
\usepackage{color}
\usepackage{enumerate}


\numberwithin{equation}{section}

\newtheorem{proposition}{Proposition}[section]
\newtheorem{lemma}[proposition]{Lemma}
\newtheorem{theorem}[proposition]{Theorem}
\newtheorem{corollary}[proposition]{Corollary}

\theoremstyle{definition}
\newtheorem{remark}[proposition]{Remark}
\newtheorem{definition}[proposition]{Definition}

\DeclareMathOperator{\Aut}{Aut}

\DeclareMathOperator{\Dom}{Dom}
\DeclareMathOperator{\pr}{pr}
\DeclareMathOperator{\Vol}{Vol}

\newcommand{\ddb}{i\partial \bar\partial}

\def\R{\mathbb{R}}

\def\P{\mathbb{P}}

\def\C{\mathbb{C}}

\def\M{\mathbb{M}}

\def\cD{\mathcal{D}}

\def\cG{\mathcal{G}}
\def\cH{\mathcal{H}}

\def\cJ{\mathcal{J}}
\def\cX{\mathcal{X}}

\def\cW{\mathcal{W}}

\def\cZ{\mathcal{Z}}

\def\cC{\mathcal{C}}
\def\cA{\mathcal{A}}

\def\ocZ{\overline{\mathcal{Z}}}

\def\delb{\overline{\partial}}

\def\Isom{\mathrm{Isom}}
\def\Lie{\mathrm{Lie}}
\def\scal{\mathrm{scal}}
\def\dim{\mathrm{dim}}
\def\Int{\mathrm{Int}}

\def\k{\mathfrak{k}}

\def\t{\mathfrak{t}}

\def\Om{\Omega}
\def\om{\omega}
\def\ep{\varepsilon}

\def\>{\rangle}

\def\<{\langle}
\def\>{\rangle}

\def\uep{\underline{\ep}}

\pagestyle{headings} \setcounter{tocdepth}{2}
\title[Analytic $K$-semistability and local wall-crossing]{Analytic $K$-semistability and local wall-crossing}

\author[Lars Martin Sektnan and Carl Tipler]{Lars Martin Sektnan and Carl Tipler}

\address{Lars Martin Sektnan, Department of Mathematical Sciences, University of Gothenburg, 412 96 Gothenburg, Sweden}

\email{sektnan@chalmers.se}

\address{Carl Tipler, Univ Brest, UMR CNRS 6205, Laboratoire de Mathématiques de Bretagne
Atlantique, France}
\email{Carl.Tipler@univ-brest.fr}

\subjclass[2020]{Primary: 53C55, Secondary: 32Q26, 32G05, 32Q15, 53C25}

\begin{document}

\begin{abstract} 
For a small polarised deformation of a constant scalar curvature K\"ahler manifold, under some cohomological vanishing conditions, we prove that $K$-polystability along nearby polarisations implies the existence of a constant scalar curvature K\"ahler metric.  In this setting, we reduce $K$-polystability to the computation of the classical Futaki invariant on the cscK degeneration. Our result holds on specific families and provides local wall-crossing phenomena for the moduli of cscK manifolds when the polarisation varies.
\end{abstract}

\maketitle

\section{Introduction}
One of the central themes of K\"ahler geometry is the search for canonical K\"ahler metrics. A natural notion of canonical metric is constant scalar curvature K\"ahler (cscK) metrics. Such metrics may exist in some, all or no K\"ahler classes and the goal is to determine which classes on a given manifold admit such metrics. The Yau--Tian--Donaldson conjecture (\cite{yau, tian, donaldson}) predicts that the existence of a cscK metric should be equivalent to an algebro-geometric notion of stability, called $K$-polystability. The conjecture has had great progress, but still remains open.

Classical results of LeBrun--Simanca (\cite{lebrunsimancaGAFA}) say that a cscK manifold will admit a cscK metric in all nearby K\"ahler classes, and on all nearby complex deformations, provided the reduced automorphism group of $X$ is trivial. In general, while (\cite{lebrunsimanca93}) says that a cscK manifold will still admit a more general type of canonical metric, called extremal K\"ahler metrics, in all nearby K\"ahler classes, the presence of automorphisms may prevent such openness when deforming the complex structure (\cite{donaldson-largesym,gabor-deformations,rt}). 

Our work focuses on the YTD conjecture when we 
vary both the K\"ahler class and the complex structure simultaneously, in the presence of symmetries. Our setting is working with polarised K\"ahler manifolds $(X, [\omega])$ that are small deformations of a cscK manifold $(X_0,\omega)$, where by a polarised K\"ahler manifold we mean the pair of a K\"ahler manifold with a fixed K\"ahler class. This in particular implies that $(X,[\omega])$ is $K$-semistable (\cite{donaldson-lower}), and covers as a special case of interest \emph{analytically $K$-semistable} manifolds, when $(X_0, \om)$ is the central fiber of a regular {\it test configuration} for $(X, [\om])$ (see Section \ref{sec:backgroundKstability} for the definitions). In the latter case, unless it is isomorphic to $X_0$, $X$ will then not admit a cscK metric in $[\omega]$ (\cite{tian,stoppa,dervan-relative}). 
We then investigate what happens as one varies this K\"ahler class, where $(X,[\omega])$ may or may not admit a cscK metric.
\begin{theorem}
\label{thm:main}
Suppose $(X, [\omega])$ is a small polarised complex deformation of a cscK manifold $(X_0, [\om])$.  Assume that $h^{0,1}(X_0)=h^{0,2}(X_0)=0$ and $H^2(X_0,TX_0)=0$. Then for any class $[\alpha] \in H^2(X, \R) \cap H^{1,1}(X)$, there exists an $\varepsilon_0 > 0$ such that for all $\varepsilon \in (-\varepsilon_0, \varepsilon_0)$, $(X,[\omega + \varepsilon \alpha])$ is $K$-polystable if and only if $X$ admits a cscK metric in $[\omega+\varepsilon \alpha]$.  Moreover, to test for $K$-polystability in these classes, it suffices to check positivity of the Futaki invariant of certain holomorphic vector fields on $X_0$.
\end{theorem}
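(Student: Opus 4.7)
My plan is a finite-dimensional moment-map reduction on the Kuranishi space of $X_0$, combined with a Kempf--Ness/Hilbert--Mumford identification of the resulting stability with $K$-polystability along the given family of polarisations.

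Under the hypotheses $h^{0,2}(X_0)=0$ and $H^2(X_0,TX_0)=0$, the Kuranishi space $B$ of $X_0$ is smooth, and both $[\omega]$ and $[\alpha]$ extend uniquely to $(1,1)$-classes on every nearby complex structure. The reduced automorphism group is, by hypothesis, a compact torus $T$ whose complexification $T^{\mathbb{C}}$ acts on $B$, and $X$ is represented by a point $x \in B$. In the spirit of Sz\'ekelyhidi and Br\"onnle, I would construct a smooth family of $T$-equivariant moment maps $\mu_\varepsilon : U \to \mathfrak{t}^\ast$ on a $T$-invariant neighbourhood $U \subset B$ of $x$, such that a point $y \in U$ represents a complex structure admitting a cscK metric in the extension of $[\omega + \varepsilon \alpha]$ if and only if $y$ lies in the $T^{\mathbb{C}}$-orbit of a zero of $\mu_\varepsilon$. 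Concretely, one solves the cscK equation transversely to the $T^{\mathbb{C}}$-action via the implicit function theorem, using $h^{0,2}(X_0)=0$ to extend the K\"ahler class and $H^2(TX_0)=0$ for unobstructedness of Kuranishi; the obstruction that remains lies along $\mathfrak{t}$ and is precisely the Fujiki--Donaldson moment map restricted to a slice. The parameter $\varepsilon$ enters smoothly, with first-order dependence determined by $[\alpha]$.

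Applying the finite-dimensional Kempf--Ness theorem to the $T^{\mathbb{C}}$-action on $U$ with moment map $\mu_\varepsilon$, existence of a cscK metric in $[\omega + \varepsilon\alpha]$ on $X$ becomes equivalent to $T^{\mathbb{C}}$-polystability of $x$. Pairing $\mu_\varepsilon$ with a rational element $\xi \in \mathfrak{t}$ computes, by running the $T^{\mathbb{C}}$-flow $x \rightsquigarrow X_0$ generated by $\xi$, the classical Futaki invariant of $\xi$ on $(X_0, [\omega + \varepsilon\alpha])$; this in turn equals the Donaldson--Futaki invariant of the test configuration of $(X, [\omega + \varepsilon\alpha])$ with central fibre $X_0$ induced by $\xi$. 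Hence $T^{\mathbb{C}}$-polystability of $x$ is equivalent to $K$-polystability of $(X, [\omega + \varepsilon\alpha])$ when tested against the restricted family of test configurations produced from holomorphic vector fields on $X_0$; this yields the ``Moreover'' clause.

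The main obstacle is proving that this restricted class of test configurations already detects genuine $K$-polystability, so that the Kempf--Ness conclusion upgrades to an iff with full $K$-polystability. Since $(X,[\omega+\varepsilon\alpha])$ is a small deformation of a cscK manifold, Donaldson's lower bound on the Calabi functional already gives $K$-semistability, so any \emph{genuine} destabiliser must have vanishing Donaldson--Futaki invariant, and one needs to show any such destabiliser is equivalent to a torus-induced one on $X_0$. I would argue this by a limiting/semistable reduction argument: from a hypothetical destabilising test configuration of $(X,[\omega+\varepsilon\alpha])$ one degenerates over $B$ to obtain a test configuration of $(X_0,[\omega+\varepsilon\alpha])$ with the same vanishing DF invariant; since $(X_0,\omega)$ is cscK with abelian reduced automorphism group this limit must be a product test configuration coming from $\mathfrak{t}$, and tracking this back through the $T^{\mathbb{C}}$-orbit of $x$ shows the original destabiliser lies in the restricted family. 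Implementing this limiting comparison, uniformly in both $\varepsilon$ and the deformation parameter, is where I expect the most delicate technical work to lie.
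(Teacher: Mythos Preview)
Your setup in the first two paragraphs is essentially the paper's: a family of $T$-equivariant slices producing moment maps $\mu_\varepsilon: B \to \mathfrak{t}^*$ whose hamiltonians at limit points compute classical Futaki invariants on $X_0$. Two technical points deserve care, though neither is fatal. First, the pulled-back symplectic forms $\Omega_\varepsilon$ on $B$ need \emph{not} be K\"ahler for the complex structure on $B$ (the perturbed slice map is not holomorphic for $\varepsilon \neq 0$), so the standard finite-dimensional Kempf--Ness theorem does not apply as a black box. Second, the ``only if'' in your biconditional ``cscK $\Leftrightarrow$ zero of $\mu_\varepsilon$ in the $T^\C$-orbit'' is not known: a cscK metric on $\cX_b$ could in principle correspond to an almost-complex structure outside the slice. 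Only the direction ``zero of $\mu_\varepsilon$ in the orbit $\Rightarrow$ cscK'' is actually needed.

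The genuine gap is in your third paragraph. Your limiting/semistable-reduction argument---degenerating an arbitrary test configuration of $(X,[\omega_\varepsilon])$ to one of $(X_0,[\omega_\varepsilon])$ and invoking $K$-polystability of the latter---does not work as stated: for $\varepsilon\neq 0$ the manifold $(X_0,[\omega_\varepsilon])$ is only extremal (LeBrun--Simanca), not cscK, hence typically $K$-\emph{unstable}, and there is no mechanism to ``track back'' a product test configuration on $X_0$ to the original one on $X$. The paper avoids this route entirely. Instead of comparing test configurations, it proves a \emph{local rigidity result} for the family of moment maps: writing $\sigma \subset \mathfrak{t}^*$ for the cone generated by the $T$-weights of $b$, one shows that $\mu_\varepsilon^*(0)+\sigma_\eta \subset \mu_\varepsilon^*\bigl(\overline{T^{\C}\cdot b}\cap B\bigr)$ for some $\eta>0$ uniform in $\varepsilon$. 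Now $K$-polystability of $(X,[\omega_\varepsilon])$, tested only against the finitely many torus-induced test configurations (those generated by the rays of $\sigma^\vee$), forces $-\mu_\varepsilon^*(0)\in\sigma$; combined with the rigidity result this yields a zero of $\mu_\varepsilon$ in the orbit closure. If the zero lies in the open orbit one has a cscK metric on $X$ directly; if it lies on the boundary, that boundary point is itself cscK and $X$ is a small deformation of it, so one invokes Sz\'ekelyhidi's theorem once more. The logical loop is then closed not by comparing test configurations but by Stoppa and Dervan: cscK $\Rightarrow$ $K$-polystable. Thus the equivalence, and the sufficiency of the restricted family of test configurations, is obtained via ``restricted $K$-polystability $\Rightarrow$ cscK $\Rightarrow$ full $K$-polystability'', never by a direct reduction of arbitrary test configurations to torus-induced ones.
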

We refer to Section \ref{sec:invariantsandwallcrossing} for the precise description of the positivity conditions required on the Futaki invariant of $X_0$ for Theorem \ref{thm:main} to apply.

We should emphasize that while deformations of cscK manifolds through perturbations of complex structures or K\"ahler classes have been already studied (\cite{lebrunsimanca93, gabor-deformations}), the results already at hand cannot be used to obtain Theorem \ref{thm:main}. Indeed, in our setting (assuming $\ep \neq 0$ and that $X$ is not isomorphic to $X_0$), one starts with a non-cscK manifold $(X,[\om])$, and tries to perturb it to a cscK one by moving the K\"ahler class to $[\om+\epsilon \alpha]$. As $(X,[\om])$ is not cscK, results from \cite{lebrunsimanca93} do not apply, and while $(X_0, [\om])$ is cscK, $(X_0, [\om+\epsilon \alpha])$ is not, and then \cite{gabor-deformations} doesn't apply either to conclude the existence of a cscK metric on $(X,[\om+\epsilon\alpha])$.

The main theorem is a local verification of the Yau--Tian--Donaldson conjecture, near a cscK manifold. The key technical difficulty to overcome is that we are in general perturbing from a strictly semistable manifold instead of one that is  polystable. The present work provides a technique that seems quite general in order to address such problems on the analytic side. Moreover, the final condition really says that the verification of $K$-polystability is a finite condition in these classes, which we now explain.

In general, $K$-polystability asks one to verify that a certain numerical invariant, the {\it Donaldson--Futaki invariant} (see Section \ref{sec:backgroundKstability}), is non-negative for all test configurations of $X$, with equality if and only if the test configuration is a product test configuration, constructed from a $\C^*$-action on $X$. Since $X_0$ admits a cscK metric, it is $K$-polystable (\cite{stoppa,dervan-relative}), and appears as a most destabilizing object for $X$. It is then natural to expect that the classical Futaki invariant of $X_0$ will encode $K$-polystability for $X$ in nearby classes.

The precise test configurations needed to test $K$-polystability of $X$ are obtained from the action of the reduced automorphism group of $X_0$ on its Kuranishi slice, and the associated invariants to be computed are explicitly described in Section \ref{sec:invariantsandwallcrossing}. We should highlight that it is a very difficult problem to test $K$-polystability in general, and our work provides a new instance where this can be achieved with a finite number of explicit test configurations. As an illustration, in Section \ref{sec:examples}, we apply Theorem \ref{thm:main} to small deformations of a $4$-point blow-up of $\P^1\times \P^1$, completing the picture developed in \cite{ClTip,rt}.

Our result exhibits some local wall-crossing phenomena for $K$-stability, as one varies the polarisation. Such phenomena arise when a moduli space parametrising families of stable objects undergo transformations when the stability condition varies. As a leading example, in \cite{DolHu}, the cone of linearisations for a given reductive group action is divided into chambers, where crossing a wall results in a birational transformation of the associated GIT quotient. While there are deep results on wall-crossings for moduli of stable bundles on surfaces (see \cite[Chapter 4, Section C]{HuLe} and reference therein), moduli of curves (e.g. \cite{HaHy}), or moduli of log Fano pairs (see e.g. \cite{AsDeLiu}), far less seems to be known about variations of cscK moduli, as built in \cite{fujikischumacher,dervan-moduli}. While a broader approach for wall-crossings of moduli of varieties is developed in \cite{dervanstabilityconditions}, in this work we focus on local variations of the cscK moduli under perturbations of the polarisation. As we explain in Section \ref{sec:invariantsandwallcrossing}, our result holds uniformly in specific families of polarised manifolds, and shows that the sign of a finite number of Futaki invariants on $X_0$ governs whether or not those families will contribute to the corresponding cscK moduli. 
\begin{remark}
It would be very interesting to address the issues and results of the present work purely algebro-geometrically, for example using the newly developed non-Archimedean framework for $K$-stability of Boucksom--Jonsson (\cite{boucksom-jonsson}). In particular, they prove the (uniform) $K$-stability analogue of the LeBrun--Simanca openness theorem (\cite{lebrunsimanca93}). Note, however, that it is crucial to allow for automorphisms in the current situation. Indeed, while the strictly $K$-semistable manifold might have discrete automorphism group, the $K$-polystable central fibre it is a small deformation of will \emph{always} have non-trivial reduced automorphism group. The work of Boucksom--Jonsson is under the assumption that the reduced automorphism group is trivial and thus their techniques cannot immediately be applied to give purely algebro-geometric counterparts to our results. On the other hand, with an extension of their work to allow for automorphisms, it is more likely their techniques also work in situations where the $K$-polystable central fibre is singular. 
The above is thus one potential application of extending their techniques to allow for non-trivial reduced automorphism groups. 
Note however that due to the lack of the resolution of the Yau--Tian--Donaldson conjecture in general, this would not give the same implications about existence of cscK metrics.
\end{remark}

Finally, let us describe the main steps in the proof of Theorem \ref{thm:main}. 
The first one is 
the construction of a family of relative K\"ahler forms parametrised by $\ep$ on the total space $\cX$ of Kuranishi's semi-universal family of polarised deformations of $(X_0, [\om])$. This requires a cohomological vanishing result, that we achieve in Section \ref{sec:cohomologyvanishing}, using Hodge theory for Dolbeault cohomology on manifolds with boundary, and this is where the hypotheses $h^{0,1}=h^{0,2}=0$ are used.  With this at hand, we reduce the problem to finding zeros of a finite dimensional moment map as in Ortu's work \cite{ortu24}, using Dervan and Hallam's framework \cite{dervanhallam23}, with the additional difficulties coming from the perturbation of the K\"ahler class. We should stress that we borrowed the idea of perturbing finite dimensional moment maps to solve a perturbative problem in K\"ahler geometry from \cite{dervanstabilityconditions,dervan-sektnan-blowups}. Then, we present two proofs to solve the resulting finite dimensional problem. The first one requires the automorphism group of $X_0$ to be abelian (see Section \ref{sec:finitedimproblem}) and is fairly different from the approaches in \cite{dervanstabilityconditions,dervan-sektnan-blowups} as it does not use the moment map flow.  Instead, we use a more geometric approach in Section \ref{sec:localrigidity} where we develop a local rigidity result for families of moment maps. This clearly and explicitly produces the numerical invariants needed to test stability. This new method seems to be quite robust, and we give a general and abstract presentation of this section, that could be of independent interest for other perturbative problems related to infinite dimensional moment maps. Then, in Section \ref{sec:solve general}, we prove Theorem \ref{thm:main} without any hypothesis on the automorphism groups. There, we follow closely the methods used in \cite{ortu24,ortusektnan24}.

\begin{remark}
 \label{rem:previous approach}
 The current version of the article has a completely different approach than our initial version. In the initial version, we perturbed the Kuranishi slice by using Moser's trick. The scalar curvature under those perturbations is only a pseudo-differential operator, leading to technical difficulties that were pointed out by the referee.
\end{remark}

\subsection*{Acknowledgments}  
The authors would like to thank Ruadha\'i Dervan for pointing several relevant references out to us, and for interesting discussions on the topic. We also thank Jacopo Stoppa and him for pointing out to us that the stability criterion we obtain can always be computed on the central fibre. Finally, we would like to thank Bo Berndtsson who suggested the book of Folland--Kohn to tackle Section \ref{sec:cohomologyvanishing}. We also thank the anonymous referee for their many useful comments and who pointed out a gap in the previous version, cf remark \ref{rem:previous approach}. CT is partially supported by the grants MARGE ANR-21-CE40-0011 and BRIDGES ANR--FAPESP ANR-21-CE40-0017. LMS is funded by a Marie Sk\l{}odowska-Curie Individual Fellowship, funded from the European Union's Horizon 2020 research and innovation programme under grant agreement No 101028041, and a Starting Grant from the Swedish Research Council (grant 2022-04574).

\section{Extremal metrics, stability and deformations}
\label{sec:background}
In this section, we will introduce notations and recall classical results about extremal K\"ahler metrics and $K$-stability. We refer the reader to \cite{gauduchon} and \cite{gabor-book} for a detailed treatment.

\subsection{Extremal metrics and the moment map picture}
\label{sec:backgroundextremal}

Let $(X_0, [\om])$ be a polarised K\"ahler manifold. 
We denote $M$ the underlying smooth manifold and $J_0$ the almost complex structure, so that $X_0=(M, J_0)$. Recall that $\om$ is said to be \emph{extremal} if its scalar curvature function is a hamiltonian (with respect to $\om$) of a holomorphic vector field. Equivalently, 
$\mathrm{scal(J_0,\om)}$ is in the kernel of the operator
\begin{equation}
 \label{eq:Doperator}
\begin{array}{cccc}
 \cD:  & \cC^\infty(M,\R) & \to & \Om^{0,1}(T^{1,0}) \\
           &   f &\mapsto & \delb (\nabla_{\om}^{1,0} f),
\end{array}
\end{equation}
where $\nabla_{\om}^{1,0} f$ is defined by $\delb f = \om( \nabla_{\om}^{1,0}f , \cdot )$. The kernel of $\cD$ is the Lie algebra of $\Isom_0(M,J_0,\om)$, the connected component of the group of hamiltonian biholomorphism of $(M,J_0,\om)$. A special case of extremal metrics are given by constant scalar curvature metrics (cscK metrics for short), for which, by Matsushima and Lichnerowicz, the {\it reduced automorphism group} is the complexification of the compact Lie group $\Isom_0(M,J_0,\om)$. 

Independently, Fujiki (\cite{fujiki}) and Donaldson (\cite{donaldson-moment}) provided an infinite dimensional moment map interpretation of cscK metrics (see \cite[Chapter 9]{gauduchon} for a detailed treatment including the case of extremal metrics). Let $\cJ$ be the space of almost-complex structures on $M$ and denote $\cJ_\om$ the subspace of $\om$-compatible elements. The group $\cG_\om$ of hamiltonian symplectomorphisms acts on $\cJ_\om$ via the standard action of diffeomorphisms on $\cJ$, and $\cJ_\om$ carries a formal symplectic structure $\Om_{\cJ_{\om}}$ such that the $\cG_\om$-action is hamiltonian, with moment map 
$$
\begin{array}{cccc}
 \mu_{\cJ_{\om}} : & \cJ_{\om} & \to & \cC^\infty_0(M,\R)\\
                   &   J     & \mapsto & \mathrm{scal}(J,\om)-\hat s.
\end{array}
$$
Here $\hat s$ is the average of the hermitian scalar curvature $\mathrm{scal}(J,\om)$ of $(J,\om)$ and $\cC^\infty_0(M,\R)$ is the space of mean value zero function (with respect to the volume form $\om^n$), identified with the dual of the Lie algebra of $\cG_\om$ via the pairing 
\begin{equation}
 \label{eq:FutMabpairing}
\langle f , h \rangle_\om = \int_M fg\; \frac{\om^n}{n!}.
\end{equation}
While the complexification of $\cG_\om$ does not exist, a key fact is that one can make sense of the leaf of a ``$\cG_\om^\C$-foliation'' in the following manner. The operator $\cD$ gives the infinitesimal action of $\cG_\om$ on $T_{J_0}\cJ_\om$. We can extend  this operator $\C$-linearly, and we will say that $J_1$ is in the $\cG_\om^\C$-orbit of $J_0$ if there are paths $(J_t)_{t\in[0,1]}$ in $\cJ_\om$ and $(\phi_t)_{t\in[0,1]}$ in $\cC_0^\infty(M,\C)$ such that
$$
\frac{d}{dt} J_t = \cD_t \phi_t,
$$
where $\cD_t$ is the operator \eqref{eq:Doperator} at $J_t$. Locally, we can consider small variations of almost complex structures along ``complexified orbits'' as follows.

For any $\phi\in \cC_0^\infty(M,\R)$ sufficiently small in some appropriate $L^{2,k}$-norm, we can, using Moser's trick, find a diffeomorphism $F_{\om,\phi}$ such that $F_{\om,\phi}^*(\om-dJ_0d\phi)=\om$. We then set 
\begin{equation}
 \label{eq:complexifiedorbit}
J_{\phi} = F_{\om,\phi}^*J_0.
\end{equation}
We then see that varying in a complexified orbit in $\cJ_\om$ amounts to varying the K\"ahler form in a K\"ahler class for a fixed complex structure, and we will take the former point of view to look for cscK (or extremal) metrics within a fixed K\"ahler class.

\subsection{$K$-stability}
\label{sec:backgroundKstability}

The Yau--Tian--Donaldson conjecture predicts that an extremal metric should exist on $(X_0,[\om])$ if and only if it satisfies a relative $K$-stability condition (\cite{yau, tian, donaldson, gabor-thesis}). While recent progress shows that the right notion to consider should be a uniform version of $K$-stability, we will only need a weaker notion in the present work.
\begin{definition}
A \emph{regular test configuration} for a polarised K\"ahler manifold $(X,[\om])$ is given by 
a relatively polarised $\C^*$-equivariant family of K\"ahler manifolds $$\pi: (\cX,\cA)\to \C$$ for the natural $\C^*$-action on $\C$ with $(\cX_1,\cA_1)\simeq (X,[\om])$. Such a test configuration is called a \emph{product test configuration} if $(\cX,\cA)$ is $\C^*$-equivariantly isomorphic to $(X\times \C, [\om_0])$, with $\C^*$-action induced by a one-parameter subgroup of the reduced automorphism group of $(X,\om)$ and the standard action on the $\C$ factor.
\end{definition}
In this definition, by {\it family} we mean that the map $\pi$ must be surjective and a proper submersion. For any such test configuration $(\cX, \cA)$, the $\C^*$-action on the central fiber $(\cX_0,\cA_0)$ provides the Donaldson--Futaki invariant :
$$
\mathrm{DF}(\cX,\cA):=-\int_{\cX_0} H_u\,(\mathrm{scal}(J_0,\om_0)-\hat s)\; \frac{\om_0^n}{n!},
$$
where $(M, J_0, [\om_0])\simeq (\cX_0,\cA_0)$, and where $H_u$ is the hamiltonian of the generator $u\in\Gamma(T^{1,0}\cX_0)$ of the $S^1$-action on the central fiber with respect to $\om_0$.

\begin{definition}
 We will say that $(X, [\om])$ is \emph{$K$-polystable} if for all regular test configurations $(\cX,\cA)$ for $(X,[\om])$, the associated Donaldson--Futaki invariant satisfies
 $$
 \mathrm{DF}(\cX,\cA)\geq 0,
 $$
 with equality only for product test configurations.
\end{definition}
\begin{remark}
$K$-polystability usually requires one to also consider test configurations that are not regular. In general, this is needed, but we will only require this weaker notion of $K$-polystability in the present work.
\end{remark}

\subsection{Deformation theory}
\label{sec:deformationsandslice}
In \cite[Section 2]{fujikischumacher}, a deformation complex for the pair $(X_0, [\om])$ is introduced. First, the usual Kuranishi complex for deformation of complex structures on $X_0=(M, J_0)$ is (\cite{kuranishi})
$$
\ldots \to \Om^{0,k}(T^{1,0}) \stackrel{\delb}{\to} \Om^{0,k+1}(T^{1,0}) \to \ldots ,
$$
where the extension of $\delb$ to $(0,p)$-forms is given in local coordinates, for $\beta = \sum_j \beta_j \otimes \frac{\partial}{\partial z^j} $ , by 
$$
\delb \beta = \sum_j \delb \beta_j \otimes \frac{\partial}{\partial z^j}.
$$
We wish to consider deformations that are compatible with $\om$. Following \cite{fujikischumacher}, we define maps
$$
\begin{array}{cccc}
\iota^k_\bullet  : & \Om^{0,k}(T^{1,0}) & \to &\Om^{0,k+1}\\
                &  \beta & \mapsto & \iota_\beta\om
\end{array}
$$
where $\iota_\beta \om$ is obtained by the composition of first contraction and then alternation operators. For $k\geq 1$, we then set :
$$
\Om_{\om}^{0,k}(T^{1,0}) := \ker \iota^k_\bullet,
$$
while we define
$$
\Om_{\om}^{0,0}(T^{1,0}):= \cC^\infty(M,\C).
$$
Together with the restriction of $\delb$, and the map $\delb^0:=\cD$ defined by (\ref{eq:Doperator}),  we obtain an elliptic complex $(\Om_{\om}^{0,\bullet}(T^{1,0}), \delb^\bullet )$.
We denote by $H^{0,k}_{\om}(T^{1,0})$ the associated cohomology groups. Then, from this complex, one can build a slice for the $\cG^\C$-action on $\cJ_\om$ around $J_0$. As in \cite{doan}, setting $K:=\Isom_0(M, J_0,\om)$, this construction can actually be made $K$-equivariantly, and even locally $K^\C$-equivariantly, for the natural $K^\C$-actions on $H^{0,1}_{\om}(T^{1,0})$ and $\cJ_\om$ induced by $K\subset \cG_\om\subset \mathrm{Diff}(M)$. To sum up: 
\begin{proposition}
 \label{prop:firstslice}
 There exists a holomorphic $K$-equivariant map 
 $$\Phi : B \to \cJ_\om$$
 from a ball around the origin $B\subset H^{0,1}_{\om}(T^{1,0})$ such that 
 \begin{enumerate}
  \item $\Phi(0)=J_0$;
  \item $Z:=\Phi^{-1}(\cJ_\om^{\mathrm{int}})$ is a complex subspace of $B$;
  \item if $b,b'\in Z$ lie in the same $K^\C$-orbit, then $(M,\Phi(b))$ and $(M, \Phi(b'))$ are isomorphic complex manifolds;
  \item The $\cG^\C$-orbit of any small complex deformation of $X_0$ polarised by $[\om]$ intersects $\Phi(Z)$.
 \end{enumerate}
\end{proposition}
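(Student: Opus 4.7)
The plan is to adapt Kuranishi's slice construction for deformations of complex structures to the Fujiki--Schumacher complex $(\Om_{\om}^{0,\bullet}(T^{1,0}), \delb^\bullet)$, while keeping the $K$-action visible at every step. First, I would fix a $K$-invariant $L^2$-inner product on each $\Om_{\om}^{0,k}(T^{1,0})$ (using that the chosen hermitian structure coming from $(J_0,\om)$ is automatically $K$-invariant, or averaging if necessary). By ellipticity, one obtains a $K$-equivariant Hodge decomposition with harmonic projector $\mathbf{H}$ and Green's operator $G$ commuting with the $K$-action. The harmonic representatives give a finite-dimensional model for $H^{0,1}_\om(T^{1,0})$, which inherits a linear $K$-action and a natural complex structure, and we take $B$ to be a small ball there.

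Next, small almost complex structures in $\cJ_\om$ near $J_0$ are parametrised by $\beta \in \Om_\om^{0,1}(T^{1,0})$ of sufficiently small $L^{2,k}$-norm through the usual graph/exponential map $\beta \mapsto J_\beta$, and the integrability of $J_\beta$ is equivalent to a Maurer--Cartan equation
$$\delb \beta + Q(\beta) = 0,$$
with $Q$ quadratic in $\beta$ and with values in $\Om_\om^{0,2}(T^{1,0})$. Following Kuranishi, I would produce the slice by solving iteratively
$$\beta = b - \delb^\ast G\, Q(\beta)$$
for harmonic $b \in B$; standard elliptic estimates and the contraction mapping principle give a unique real-analytic solution $\beta(b)$ with $\beta(0)=0$, and I define $\Phi(b) := J_{\beta(b)}$. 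Since $\mathbf{H}$, $G$, $Q$, and the exponential map are all $K$-equivariant, so is $\Phi$. The map is holomorphic because the underlying deformation problem has a complex structure for which $\delb^\ast G Q$ is holomorphic in $b$, exactly as in Kuranishi's original argument.

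For the four properties: (1) is immediate from $\beta(0)=0$. For (2), applying $\mathbf{H}$ to the Maurer--Cartan equation produces a $K$-equivariant holomorphic obstruction map $\mathrm{ob} : B \to H^{0,2}_\om(T^{1,0})$, $\mathrm{ob}(b) = \mathbf{H}\, Q(\beta(b))$, whose zero locus is exactly $Z = \Phi^{-1}(\cJ_\om^{\mathrm{int}})$, making $Z$ a complex analytic subspace of $B$. Property (3) follows by upgrading the $K$-equivariance to a local $K^\C$-equivariance: since $K^\C$ acts linearly on $H^{0,1}_\om(T^{1,0})$ while on $\cJ_\om$ the complexified action is only formal through $\cD$, one integrates a path $b_t$ in the $K^\C$-orbit inside $Z$ to a path $\Phi(b_t)$ of integrable structures related by time-dependent hamiltonian holomorphic flows, whose time-$1$ map furnishes the desired biholomorphism $(M,\Phi(b)) \simeq (M,\Phi(b'))$. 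Finally, (4) is the versality statement: any small polarised complex deformation of $(X_0,[\om])$ can, after a hamiltonian diffeomorphism (Moser) and action by $\cG_\om^\C$ (in the sense of \eqref{eq:complexifiedorbit}), be represented by an $\om$-compatible integrable $J$ close to $J_0$, and the implicit function theorem applied to the Fujiki--Schumacher slice shows that this $J$ lies in the $\cG_\om^\C$-orbit of some $\Phi(b)$, as in \cite{fujikischumacher}.

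The main obstacle is the passage from $K$-equivariance to local $K^\C$-equivariance in (3): one must turn the purely infinitesimal complexified action on $\cJ_\om$ into an honest statement about biholomorphisms between the deformed complex manifolds. Here I would follow Doan's approach \cite{doan}, lifting paths in the finite-dimensional $K^\C$-orbit to paths of hamiltonian biholomorphisms by combining the Kuranishi iteration with an integration of vector fields of the form $\cD_t \phi_t$ along the slice. Once this is established, holomorphicity of $\Phi$ and the fact that $\mathrm{ob}$ is $K^\C$-equivariant (being a holomorphic map between $K^\C$-representations whose $K$-equivariance is uniquely extended) close up the argument.
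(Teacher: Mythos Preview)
The paper does not actually prove Proposition~\ref{prop:firstslice}: it is stated as a summary of known results, with the preceding paragraph attributing the slice construction to the Kuranishi/Fujiki--Schumacher deformation theory and the $K$- (and local $K^\C$-) equivariance to Doan~\cite{doan}. Your sketch is a faithful and correct expansion of precisely that route---Kuranishi iteration on the elliptic complex $(\Om_\om^{0,\bullet}(T^{1,0}),\delb^\bullet)$, $K$-equivariant Hodge theory for the Green's operator, the obstruction map $\mathbf{H}\,Q(\beta(b))$ cutting out $Z$, and Doan's integration argument for (3)---so there is no discrepancy in approach, only in level of detail.

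One minor point worth tightening: in your treatment of (3) you speak of ``hamiltonian holomorphic flows'' and ``hamiltonian biholomorphisms''. The flows along the $i\t$-directions in the complexified orbit are not hamiltonian symplectomorphisms of $(M,\om)$; rather, they are diffeomorphisms pulling back $\om$ to another K\"ahler form in the same class (as in \eqref{eq:complexifiedorbit}), and it is these that furnish the biholomorphism between $(M,\Phi(b))$ and $(M,\Phi(b'))$. This is exactly what Doan's argument produces, but the phrasing should reflect that the complexified directions move the symplectic form within its cohomology class rather than preserve it.
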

We refer to \cite[Lemma 6.1]{ChenSun} for a detailed proof of this proposition.  We will assume from now on that $H^2(X_0,TX_0)=0$ so that  $Z=B$ in Proposition \ref{prop:firstslice}. From deformation theory (\cite[Chapter 8]{kuranishibook}), this map induces a $K$-equivariant semi-universal family of polarised deformations of $(X_ 0, [\om])$ 
$$
\pi : \cX\to B
$$
such that any two points $(b, b')$ in the same $K^\C$-orbit on $B$ correspond to isomorphic polarised manifolds $(\cX_b,[\om])\simeq (\cX_{b'}, [\om])$. We will also assume that  $h^{0,2}(X_0)=0$, so that for any small complex deformation $X$ of $X_0$, one has $h^{0,2}(X)=0$ (\cite[Chapter 9]{voisin1}) and 
$$
H^{1,1}(X,\R) = H^2(M, \R) = H^{1,1}(X_0, \R),
$$
where we denote $H^{1,1}(X,\R):=H^{1,1}(X,\C)\cap H^2(X,\R)$.  Let $\cH^{1,1}(X_0,\R)$ be the space of $(\om,J_0)$-harmonic real $(1,1)$-forms on $(X_0, \om)$ and let $( \alpha_1, \ldots, \alpha_r)$ be a basis of this vector space. Note that by uniqueness of harmonic representatives of Dolbeault cohomology classes, the $\alpha_i$'s are $K$-invariant. For any $\uep:=(\ep_1,\ldots, \ep_r)$, we set 
$$
\om_{\uep}:=\om+\sum_{i=1}^r \ep_i \alpha_i.
$$
This defines a K\"ahler form on $X_0$ for all $\uep$ in a sufficiently small neighbourhood $U$ of zero in $\R^r$. For any $b\in B$ and $\uep\in U$, we have $[\om_{\uep}]\in H^{1,1}(\cX_b,\R)$. However, $\om_{\uep}$ itself might not be of type $(1,1)$ with respect to $\Phi(b)$. We wish to correct this, and to produce a family of relative K\"ahler forms $\om'_{\uep}$ on the total space $\cX$ of the family whose cohomology class restricts to $[\om_{\uep}]$ on each fiber of $\cX\to B$. To perform this, 
we first need to show a cohomology vanishing result for $\cX$.

\section{Cohomology vanishing}
\label{sec:cohomologyvanishing}

To apply our strategy we will need to use a good globally $(1,1)$ representative of the relevant cohomology classes on each fibre of the universal family $\cX \to B$. In order to achieve this, we will apply a Hodge theory in Dolbeault cohomology for complex manifolds with boundary. The main goal of this section is to obtain a cohomology vanishing result on $\cX$ for this cohomology, see Lemma \ref{lem:vanishing02}.

\subsection{Hodge theory for Dolbeault cohomology on manifolds with boundary}
We now explain the relevant part of this theory that we will need, see \cite{follandkohn} for this and much deeper results on this topic. Let $Y$ be a complex manifold with boundary $\partial Y$ sitting inside some larger ambient complex manifold $Y'$. More precisely, assume $\partial Y$ is given by the equation $R=0$, where $R$ is a smooth function such that $R<0$ on $Y$, $R>0$ on $Y' \setminus \bar Y$ and $dR \neq 0$ at any point in $\partial Y$. Assume that $Y'$ is K\"ahler and fix a K\"ahler metric $\omega_Y$ on $Y$ (which we assume is the restriction of a K\"ahler metric on $Y'$). In our application, we will take $Y$ to be the preimage $\pi^{-1}(B_1)$ of an open ball in $B$, which we without loss of generality can assume has radius $1$ with respect to the Euclidean metric, and $Y'$ to be the preimage of a slighly larger ball. $R$ is then given by $\pi^*(\varrho-1)$, where $\varrho$ is the radius function on $\C^n$. We will also take the K\"ahler metric to be the metric induced by $\omega + \pi^*(\omega_{Euc})$, which we can assume is K\"ahler on all of $\cX$ after shrinking $B$. Note that while the form $\omega + \pi^*(\omega_{Euc})$ is a product with respect to the smooth decomposition $\cX \cong B \times M$, the corresponding metric is not, as the holomorphic structure on $\cX$ is not a product. 

Going back to the general setup,  let $\Omega^{p,q}(\bar Y)$ denote the subspace of $\Omega^{p,q} (Y)$ consisting of elements that can be extended smoothly to $Y'$ and let $\delta : \Omega^{p,q}(Y) \to \Omega^{p,q-1}(Y)$ denote the formal adjoint of $\bar \partial$, defined by requiring 
$$
\int_Y (\delta (\alpha), \beta)_{\omega_Y} = \int_Y (\alpha, \bar \partial \beta)_{\omega_Y}
$$
for all $\beta$ with compact support in $Y$. Let $L^2(\Omega^{p,q}(\bar Y))$ denote the $L^2$-completion of $\Omega^{p,q}(\bar Y)$ with respect to the pairing
$$
\langle \cdot,\cdot \rangle = \int_Y (\cdot,\cdot)_{\om_Y}
$$
induced by $\omega_Y$. The operator $\bar \partial$ extends to an operator $L^2(\Omega^{p,q}(\bar Y)) \to L^2(\Omega^{p,q+1}(\bar Y))$ by taking the closure of the graph of $\bar \partial$ on $\Omega^{p,q}(\bar Y)$. 

There is then also an adjoint operator $\bar \partial^*$ to $\bar \partial$ on $L^2(\Omega^{p,q}(\bar Y))$. Its domain consists of the $\alpha \in L^2(\Omega^{p,q}(\bar Y))$ such that there exists a $c>0$ such that $|\langle \alpha, \bar \partial \tau \rangle| \leq c\|\tau\|$ for all $\tau \in \Omega^{p,q-1}(\bar Y)$. For such $\alpha$, the map $\tau \mapsto \langle \alpha, \bar \partial \tau \rangle$ is a bounded linear operator, and so is represented by some $\bar \partial^* \alpha \in L^2(\Omega^{p,q-1}(\bar Y))$, by the Riesz representation theorem. Note that as we are on a manifold with boundary, $\delta$ differs from the Hilbert space adjoint $\bar \partial^*$ on $\Omega^{p,q}(\bar Y)$ and Sobolev space completions, due to the presence of boundary terms when integrating by parts.

For $\alpha \in \Omega^{p,q}(\bar Y)$, let $\sigma_{\alpha} \in \Omega^{p,q-1}(\bar Y)$ be such that for each $y \in \bar Y$, 
$$(\beta_y, \sigma_{\alpha})_{\om_Y} = ((dR)^{0,1} \wedge \beta_y, \alpha)_{\om_Y}$$ for all $\beta_y \in \Lambda^{p,q-1}(T_y\bar Y)$, where we are using the pointwise inner product.
\begin{definition}
With $\bar Y \subset Y'$ and the defining function $R$ of $\partial Y$ as above, we say that a $(p,q)$-form $\alpha$ satisfies the \emph{$\bar \partial$-Neumann conditions} if on the boundary of $\bar Y$ we have 
\begin{itemize}
\item $\sigma_{\alpha} =0$;
\item $\sigma_{\bar \partial \alpha} = 0$.
\end{itemize}
\end{definition}

The first part of the $\bar \partial$-Neumann conditions has the following interpretation.
\begin{proposition}[{\cite[Prop. 1.3.2]{follandkohn}}]
We have that 
$$
\textnormal{Dom}(\bar \partial^*) \cap \Omega^{p,q}(\bar Y) = \{ \alpha \in \Omega^{p,q}(\bar Y) : \sigma_{\alpha}\big|_{\partial Y} = 0 \}
$$
and on this subspace $\bar \partial^* = \delta.$
\end{proposition}

Forms satisfying the $\bar \partial$-Neumann conditions in a weak sense form the domain of the so-called \emph{Friedrichs operator} $F$. This is the Hilbert space extension of the operator $I + \Delta$ where $\Delta$ is the $\bar \partial$-Laplacian on $(p,q)$-forms with compact support in $Y$. We will let $\mathcal{H}^{p,q}(\bar Y)$ denote the kernel of $F-I$ with this domain. Since we can apply the adjoint in the usual manner for this operator, we have that $\mathcal{H}^{p,q}(\bar Y) = \ker(\bar \partial) \cap \ker(\bar \partial^*)$ as usual (\cite[p. 18]{follandkohn}).

The following is the outcome of \cite[Chapter I]{follandkohn}.
\begin{proposition}
\label{prop:weakHodge}
There is a weak Hodge decomposition
$$
L^2(\Omega^{p,q}(\bar Y)) = \overline{\left(\bar \partial \delta (\Dom (F))\right)} \oplus \overline{\left(\delta \bar \partial (\Dom (F))\right)}  \oplus \mathcal{H}^{p,q}(\bar Y),
$$
where in the first two components we are taking the closure.
\end{proposition}
\begin{remark}
\label{rem:basicestimate}
In \cite[Prop. 3.1.12]{follandkohn}, Folland and Kohn assume the ``basic estimate" holds. Then there is a genuine decomposition 
$$
L^2(\Omega^{p,q}(\bar Y)) = \bar \partial \delta (\Dom (F)) \oplus \delta \bar \partial (\Dom (F))  \oplus \mathcal{H}^{p,q}(\bar Y)
$$
where no closure need to be taken in the first two components. The basic estimate holds for pseudoconvex domains in $\C^n$, and so therefore on $B$, but it is not clear to the authors that the basic estimate holds on the universal family $\cX$. However, the weak Hodge decomposition will be sufficient for our purposes.
\end{remark}

A contrasting feature for manifolds with boundary compared to the case of complex manifolds without boundary is that usual Schauder estimate fails near the boundary. However, on the interior, this estimate works as usual (\cite[Theorem 2.2.5]{follandkohn}). In particular, this implies  that if $\alpha \in \overline{\left(\bar \partial \delta (\Dom (F))\right)}$ and is smooth, then on a $Y'' \subset Y$ with compact closure we have that $\alpha = \bar \partial \beta$ for some smooth $(p,q-1)$-form $\beta$, see \cite[Theorem 2.2.9]{follandkohn}. In particular this holds for all $\bar \partial$-closed $\alpha$ satisfying the boundary condition that are orthogonal to $\mathcal{H}^{p,q}(\bar Y)$.

\subsection{Vanishing of the $\bar \partial$-cohomology}
We now apply the above theory to our special case of interest. Our goal is to show that for our choice of $Y=\cX$, then under our hypotheses we have the vanishing of the relevant cohomology group. We now let $\cX', B'$ be our previous $\cX,B$ and shrink $\cX,B$ slightly so that $B$ is given by a ball.
\begin{lemma}
\label{lem:vanishing02}
Suppose $H^{0,1}(\cX_0)=0$ and $H^{0,2}(\cX_0)=0$. Then $\mathcal{H}^{0,2}(\bar \cX) =0$.
\end{lemma}

Given this, any $\bar \partial$-closed $(0,2)$-form on $\cX$ will be $\bar \partial$-exact (at least on a smaller $\cX$).  To prove Lemma \ref{lem:vanishing02}, we will use the vanishing of all the higher cohomology on $\bar B$:
\begin{lemma}[\cite{folland72}]
\label{lem:PLball}
Let $B_1$ be the unit ball in $\C^m$. Then $\mathcal{H}^{p,q} (\bar B_1)=0$ for all $p+q>0$.
\end{lemma}

We now apply this to show that $\mathcal{H}^{0,2}(\bar \cX) = 0$. Note that as $\cX$ is $B \times M$ as a smooth manifold, we have a splitting $T\cX = TB \oplus TM$ and for all tensor bundles. In fact, this agrees with the symplectic splitting induced by the fibration structure $\cX \to B$ with relative symplectic from $\omega$, where the splitting is the $\omega$-orthogonal complement to the kernel $TM$ of derivative of the projection map, since $\omega$ is pulled back from the projection to the $M$-factor. However, note that as the splitting is not preserved by the almost-complex structure, this is not an orthogonal splitting with respect to the induced metric on $T\cX$. For every $b \in B$, we have an almost-complex structure $J_b$ on $M$ and $J_b(TM) = TM$ for all $b \in B$. However, $J_b$ does not preserve $TB$ in the above splitting. As the metric on $\bar \cX$, we will take $\omega + \omega_{Euc}$. With respect to the splitting of $T\cX$, $\omega$ is purely vertical and $\omega_{Euc}$ is purely horizontal. Thus the volume form is 
$$
\mathrm{dvol} = (\omega+\omega_{Euc})^{m+n} = \omega^n \wedge \omega_{Euc}^m.
$$
We will denote by $(z_i)_{1\leq i\leq m}$ holomorphic coordinates on $B$, so that the $d\bar z_i$'s stand for horizontal $(0,1)$-forms on $\cX$ (we omit the pullback in the notation).

We first show vanishing of the $(0,1)$-cohomology.
\begin{lemma}
\label{lem:vanishing01}
Suppose $H^{0,1}(\cX_0) = 0$. Then $\mathcal{H}^{0,1}(\bar \cX) = 0$.
\end{lemma}
\begin{proof}
Note that the hypothesis $H^{0,1}(\cX_0)=0$ implies $H^{0,1}(\cX_b)=0$ for each $b\in B$ (up to shrinking $B$). Suppose $\tau \in \mathcal{H}^{0,1}(\bar \cX).$ Then $\tau$ is $\bar \partial$-closed on each fibre $\cX_b$ for $b \in B$ and so by the vanishing of the $H^{0,1}(\cX_b)$, we can for each $b$ find an $f_b$ such that $\bar \partial_b f_b = \tau\big|_{\cX_b}$. Since the complex structure varies smoothly with $b$ we can ensure that the $f_b$'s glue to a smooth function $f$ on $\bar \cX$. This function then satisfies $\tau\big|_{\cX_b} = (\bar \partial f)\big|_{\cX_b}$ and so $\delb f$ and $\tau$ differ by a horizontal term. Since the ``basic estimate" discussed in Remark \ref{rem:basicestimate} may not hold on $\Omega^{0,1}(\bar \cX)$, we cannot conclude that $\tau$ is smooth up to the boundary. However, via cut-off functions, we can then create a sequence of functions $f_i$ such that $\bar \partial f_i$ converges to some $\zeta \in \Omega^{0,1}(\bar \cX)$ and such that $\tau - \zeta$ is purely horizontal, i.e.
$$
\tau = \zeta + \gamma
$$
where $\gamma = \sum_i h_i d\overline{z}_i$ for some functions $h_i$. Since $\tau$ and $\zeta$ are $\bar \partial$-closed, so is $\gamma$. In particular, since the only mixed component of $\bar \partial \tau$ is $\sum_i \bar \partial_b(h_i) \wedge d\overline{z}_i$, we have $\bar \partial_b(h_i) =0$ for all $i$ and $b$, and so the $h_i$, and hence $\gamma$, are pulled back from the base $B$.

Next, we make a suitable change in $\zeta$ to use that $\tau$ is co-closed. Consider the functional $\Phi$ on $\Omega^{0,0}(\bar B)$ given by
$$
h \mapsto \langle \delb^*\zeta, \pi^*h \rangle_{\bar \cX}.
$$
This is the represented as 
$$
\Phi(h) = \langle \tilde{g},h \rangle_{\bar B}
$$
for some $\tilde{g} \in L^2(\Omega^{0,0}(\bar B))$. Moreover, since $\Phi$ vanishes on constant functions, $\tilde{g}$ has average $0$, and so is in the image of the Laplacian on the base. Thus
$$
\Phi(h) = \langle \bar \partial g', \bar \partial h \rangle_{\bar B}
$$
for some function $g'$ on $\bar B$. 

Now, note that 
$$
\langle \bar \partial g', \bar \partial h \rangle_{\bar B} = \frac{1}{\Vol(\cX_0)} \langle \bar \partial \pi^*g', \bar \partial \pi^*h \rangle_{\bar \cX}
$$
since the induced product on forms on $\bar \cX$ restricts to the Euclidean inner product on forms pulled back from $\bar B$, and the fibre volume is independent of the fibre (here we use the unique continuous extension of $\pi^*$ to the Sobolev $L^2$ spaces under consideration). If we then let $g = \frac{\pi^*g'}{\Vol(\cX_0)}$, we have 
$$
\langle \zeta - \bar \partial (g) , \bar \partial \pi^*h \rangle_{\cX} =0 
$$
for all $h$ on $\bar B$. But this is equivalent to
$$
\langle \bar \partial^* (\zeta - \bar \partial (g)), \pi^*h \rangle_{\cX} =0
$$
for all $h$. If we replace $\gamma$ by $\gamma' = \gamma+ \bar \partial g$, then $\gamma'$ is still pulled back from a $(0,1)$-form $\gamma''$ on the base. Since $\bar \partial^* \tau = 0$ we then see that for all $h$
\begin{align*}
0 =& \langle \bar \partial^* \tau , \pi^* h \rangle \\
=& \langle \bar \partial^*(\zeta - \bar \partial(g)) + \bar \partial^* \gamma', \pi^*h \rangle \\
=& \langle \bar \partial^* \gamma', \pi^* h \rangle \\
=& \langle \gamma', \pi^* \bar \partial_{\bar B} h \rangle \\
=& \Vol(\cX_0) \langle \gamma'', \bar \partial_{\bar B} h \rangle_{\bar B}.
\end{align*}
Thus $\gamma''$ is closed and co-closed, hence vanishes by Lemma \ref{lem:PLball}. So $\tau = \zeta - \bar \partial (g)$, which in particular is in the closure of the image of $\bar \partial$, since $\zeta$ is. But the intersection of the closure of the image of $\bar \partial$ and $\mathcal{H}^{0,1}(\bar \cX)$ vanishes by Proposition \ref{prop:weakHodge}, so $\tau = 0$.
\end{proof}

Next, we extend the above to $(0,2)$-forms.
\begin{proof}[Proof of Lemma \ref{lem:vanishing02}]
Let $\alpha \in \mathcal{H}^{0,2}(\bar \cX)$. As in the proof of Lemma \ref{lem:vanishing01}, using that $H^{0,2}(\cX_b)=0$ for all $b\in B$, we obtain that $\alpha$ restricted to each fibre is exact, and so we can find a $(0,2)$-form $\eta$ on $\bar \cX$ which is in the closure of the image of $\bar \partial$ and such that the purely vertical parts of $\alpha$ and $\eta$ agree. Hence there are further $(0,1)$-forms $\tau_i$ and functions $h_{i,j}$ such that 
\begin{align*}
\alpha =& \eta + \sum_i \tau_i \wedge d\overline{z}_i + \sum_{i,j} h_{i,j} d\overline{z}_i \wedge d \overline{z}_j.
\end{align*}

When we now use $\bar \partial \alpha = 0$, we see that there is only one term which has two vertical components and one horizontal, namely using the purely vertical part of $\bar \partial \tau_i$ in 
$$\sum_i \bar \partial \tau_i \wedge d \overline{z}_i,$$
which then has to vanish. By the vanishing of the groups $H^{0,1}(\cX_b)$, each of the $\tau_i$ are therefore in the closure of the image of $\bar \partial$, up to a horizontal term. If we incorporate this term in the purely horizontal expression of $\alpha$ we see that we can write $\alpha$ as
\begin{align*}
\alpha =& \eta + \sum_i \zeta_i \wedge d\overline{z}_i + \sum_{i,j} h_{i,j} d\overline{z}_i \wedge d \overline{z}_j,
\end{align*}
where each $\zeta_i$ is in the closure of the image of $\bar \partial$. Now, as $\bar \partial (f) \wedge d\overline{z}_i = \bar \partial (f d \overline{z}_i)$ for a function $f$, we see that in fact $\zeta_i \wedge d\overline{z}_i$ is in the closure of the image of $\bar \partial$. So we can incorporate this term in $\eta$, and write $\alpha$ as
\begin{align*}
\alpha =& \eta + \sum_{i,j} h_{i,j} d\overline{z}_i \wedge d \overline{z}_j.
\end{align*}
Using closedness of $\alpha$, the functions $h_{i,j}$, and hence the $(0,2)$-form $$\gamma:=\sum_{i,j} h_{i,j} d\overline{z}_i \wedge d \overline{z}_j,$$ are pulled back from $B$. 

The rest of the argument then follows the proof of Lemma \ref{lem:vanishing01}. Introduce this time $\Phi$ on $\Om^{0,1}(\overline{B})$ by setting
$$
\Phi(\upsilon)= \langle \delb^* \eta, \pi^*\upsilon \rangle_{\bar \cX}.
$$
The unique continuous extention of $\Phi$ is represented as 
$$
\Phi(\upsilon) = \langle \tilde{\tau},\upsilon \rangle_{\bar B}
$$
for some $\tilde{\tau} \in L^2(\Omega^{0,1}(\bar B))$. Moreover, since $\cH^{0,1}(\bar B)$ vanishes by Lemma \ref{lem:vanishing01}, $\tilde \tau$ is in the image of the Laplacian on the base. Thus
$$
\Phi(\upsilon) = \langle \bar \partial \tau', \bar \partial \upsilon \rangle_{\bar B}+\langle \bar \partial^* \tau', \bar \partial^* \upsilon \rangle_{\bar B}
$$
for some $(0,1)$-form $\tau'$ on $\bar B$. In particular, for any $\upsilon\in \mathrm{Im}(\bar\partial^*)\subset L^2(\Omega^{0,1}(\bar B))$,
$$
\Phi(\upsilon)=\langle  \eta, \pi^*\delb \upsilon \rangle_{\bar \cX}=\langle \bar \partial \tau', \bar \partial \upsilon \rangle_{\bar B}.
$$
Then, replacing $\gamma$ by $\gamma'=\gamma+\frac{\pi^*\delb \tau'}{\Vol(\cX_0)}$, we can write
$$
\alpha=\eta-\delb\left(\frac{\pi^*\tau'}{\Vol(\cX_0)}\right)+\gamma',
$$
where $\gamma'$ is the pullback of a $\delb$-closed $(0,2)$-form $\gamma''$. Using that $\alpha$ is co-closed, together with the equality 
$$
\langle \bar \partial \tau', \bar \partial \upsilon \rangle_{\bar B} = \frac{1}{\Vol(\cX_0)} \langle \bar \partial \pi^*\tau', \bar \partial \pi^*\upsilon \rangle_{\bar \cX}
$$
we obtain
$$
\langle \gamma'',\delb \upsilon \rangle_{\bar B}=0
$$
for any $\upsilon\in \mathrm{Im}(\bar\partial^*)$. Taking $\upsilon=\bar\partial^*\gamma''$ yields that $\gamma''$ is a harmonic $(0,2)$-form, and so vanishes by Lemma \ref{lem:PLball}. Hence, $\gamma'=0$ and $\alpha$ is in the image of $\delb$, so has to vanish as well, being harmonic.
\end{proof}

Combining Lemma \ref{lem:vanishing02}, Proposition \ref{prop:weakHodge} and the discussion following it, we get the following
\begin{corollary}
\label{cor:dbar}
Suppose $H^{0,1}(\cX_0)=0$ and $H^{0,2}(\cX_0)=0$. Then for any smooth $(0,2)$-form $\beta$ on $\cX$ such that $\delb \beta = 0$ and any $\cX' \subset \cX$ with compact closure, there exists a smooth $(0,1)$-form $\tau$ such that $\delb \tau = \beta$ on $\cX'$.
\end{corollary}

\section{Reduction to a finite dimensional moment map problem}
We now show how to reduce the cscK equation on $(\cX_b, [\omega_{\uep}])$ to a finite dimensional moment map problem. This will be done in several stages. In Section \ref{sec:initial11form} we first use the results from the previous section to obtain an initial approximate solution on each fibre, which is globally $(1,1)$. In Section \ref{sec:fibreproblem} we then solve a fibrewise problem before we in Section \ref{sec:findimproblem} finally show that having done so, we are left with a moment map equation in finite dimensions, relying on the Dervan--Hallam point of view on the moment map interpretation of the cscK equation.

\subsection{Constructing the initial form globally}
\label{sec:initial11form}

We recall the previous setting.  Let $\cH^{1,1}(X_0,\R)$ stands for the space of $(\om,J_0)$-harmonic real $(1,1)$-forms on $(X_0, \om)$ and let $( \alpha_1, \ldots, \alpha_r)$ be a basis of this vector space. Note that by uniqueness of harmonic representatives of Dolbeault cohomology classes, the $\alpha_i$'s are $K$-invariant. For any $\uep:=(\ep_1,\ldots, \ep_r)$, we set 
$$
\om_{\uep}:=\om+\sum_{i=1}^r \ep_i \alpha_i.
$$
This defines a K\"ahler form on $X_0$ for all $\uep$ in a sufficiently small neighbourhood $U$ of zero in $\R^r$. For any $b\in Z$ and $\uep\in U$, we have $[\om_{\uep}]\in H^{1,1}(\cX_b,\R)$. However, $\om_{\uep}$ itself might not be of type $(1,1)$ with respect to $\Phi(b)$, and while the pullback (through $B\times M \to M$) of $\om_{\uep}$ is a globally closed form on $\cX$, it may not be $(1,1)$ there even if it is on the fibres. For this reason we perform a first perturbation of the form $\om_{\uep}$.
\begin{lemma}
\label{lem:first11form}
Up to shrinking $\cX$ and $B$, there exists a $(1,1)$-form $\omega'_{\uep}$ on $\cX$ such that $[\omega'_{\uep}]_b=[\omega_{\uep}]_b$ for all $b$ and $\omega'_{\uep} = pr_2^* \omega + O(|\uep|)$.
\end{lemma}
In the above $[*]_b$ denotes the corresponding second cohomology class on the fibre over $b \in B$.

If we pull back $\omega_{\uep}$ from the smooth projection $\cX \to M$, then we obtain a closed form $\widetilde \omega_{\uep}$ such that $[\widetilde \omega_{\uep}]_b=[\omega_{\uep}]_b$ for all $b \in B$. However, this is only $(1,1)$ on the central fibre of the family $\cX \to B$. It may not be $(1,1)$ on all of $\cX$, or even on the non-zero fibres. Note that using the assumption that $H^{0,2}(X_0)=0$, which implies $H^{0,2}(X_b)=0$ for all $b$ after potentially shrinking $B$, we could change $\widetilde \omega_{\uep}$ to become $(1,1)$ on each fibre of $\cX \to B$, using the Hodge decomposition on $(0,2)$-forms. However, this is not enough as $\widetilde \omega_{\uep}$ may still not be globally $(1,1)$. This is the reason we call upon the Hodge decomposition for complex manifolds with boundary from Section \ref{sec:cohomologyvanishing} which allow us to remove the $(0,2)$ and $(2,0)$-parts of $\omega_{\uep}$ globally on $\cX$. 

\begin{proof}[Proof of Lemma \ref{lem:first11form}]
Let $\alpha$ be some harmonic representative on $\cX_0$ as above. We will abuse notation and denote also by $\alpha$ the form $\pr^*_M\alpha$ on $\bar \cX$, where $\pr_M : \bar \cX \to M$ is the smooth projection to the second factor, using that as a smooth manifold $\bar \cX=\bar B \times M$. In what follows, we will be working on the total space $\cX$ (in particular $J$ stands for the almost-complex structure on $\cX$). Now the $(1,1)$-component of the $2$-form $\alpha$ is the fixed part of $\alpha$ under the map $\alpha \mapsto \alpha(J\cdot,J\cdot)$. Thus, its $(2,0)+(0,2)$-component is given by
 \begin{equation}
  \label{eq:02part}
 \alpha^{(2,0)+(0,2)}=\frac{\alpha-\alpha(J\cdot, J\cdot)}{2}.
 \end{equation}
 In particular, for fixed $\alpha$, we see that this is smooth. 
  
Now, consider the forms $\alpha_i$. They are $d$-closed, and thus satisfy $\delb \alpha_i^{(0,2)}=0$. Moreover, they satisfy the $\bar \partial$-Neumann conditions. Indeed, since the forms are $\bar \partial$-closed, the condition $\sigma_{\bar \partial \alpha_i^{(0,2)}} = 0$ is trivially satisfied. Moreover, as $\alpha_i$ is pulled back from $M$, $(dR^{0,1} \wedge \beta_p, \alpha_i) = 0$ for all $(0,1)$-forms $\beta_p$ at $p \in \bar \cX$, i.e. $\sigma_{\alpha_i} = 0$. But the decomposition of $\alpha_i$ into its $(2,0)$, $(1,1)$ and $(0,2)$-components is an orthogonal decomposition at every point. So $\sigma_{\alpha^{0,2}_i} = 0$, too.

From Corollary \ref{cor:dbar} we deduce that there is $\theta_i'\in \Om^{(0,1)}(\cX,\R)$ such that $\delb \theta_i'=\alpha_i^{(0,2)}$, where we have now shrunk $\cX$ slightly to ensure that we have a smooth such form (note that the shrinking is independent of $i$). Let $\theta_i'' = \overline{\theta_i'}$, and set $\theta_i=\theta_i'+\theta_i''$, which is a real $(2,0)+(0,2)$-form. Since the $\alpha_i$ are real $\alpha_i^{2,0}=\overline{\alpha_i^{0,2}}$, and so we have that $\partial \theta_i'' = \overline{\bar \partial \theta_i'}=\alpha_i^{(2,0)}$. Let
\begin{align*}
 \om'_{\uep} & = \displaystyle\om+\sum_{i=1}^r \ep_i(\alpha_i - d \theta_i) \\
              & =  \displaystyle\om_{\uep}-d \left( \sum_{i=1}^r\ep_i \theta_i\right)\\
              & = \om_{\uep}- d \theta_{\uep} 
 \end{align*}
 where 
 $$
 \theta_{\uep}=\sum_{i=1}^r\ep_i \theta_i.
 $$
Then $\om'_{\uep}$ is globally $(1,1)$ on $\cX$. Since we have changed $\om_{\uep}$ by a globally closed form, its induced class on each fibre remains the same, and as the $\theta_i$ are independent of $\uep$, it is clearly $O(|\uep|)$, as required.
\end{proof}

\subsection{Solving the fibrewise problem}
\label{sec:fibreproblem}
From now on, we assume that $(M, J_0,\om)$ is cscK.
At this point we have, for each $\uep \in U$, a holomorphic submersion 
$$
(\cX, \om'_{\uep}) \to B 
$$
where $\omega'_{\uep}$ is relatively K\"ahler and whose induced class on each fibre is $[\om_{\uep}]$. Furthermore, we have a moment map $\nu_{\uep} : \cX \to \k^*$.  Let
$$
\k_{\uep} = \{ \nu_{\uep}(v) : v \in \k \}
$$
be the space of potentials with respect to $\om'_{\uep}$, and more generally, for a function $\phi : \cX \to \R$, let 
$$
\k^{\phi}_{\uep} = \{ \nu_{\uep}(v) + v(\phi) : v \in \k \}
$$
be the space of potentials with respect to $\om'_{\uep}+ \ddb \phi$. Further, for a function $\phi_b : \cX_b \to \R$ on the fibre, define 
$$
\k^{\phi_b}_{\uep,b} = \{ \nu_{\uep}(v)\big|_{\cX_{b}} + \frac{1}{2} \langle v, \nabla_{\omega'_{\uep,b}} (\phi_b) \rangle_{\om'_{\uep,b}} : v \in \k \} \subset C^{\infty}(\cX_b).
$$
Note that $\k^{0}_{\uep,b} = \k_{\uep}\big|_{\cX_{b}}$, i.e. the space of functions obtained by restricting the functions in $\k_{\uep}$ to $\cX_{b}$. However, if $\phi : \cX \to \R$ is a function and we let $\phi_{b} = \phi\big|_{\cX_{b}}$, then 
$$
\k^{\phi_b}_{\uep,b} \neq \k^{\phi}_{\uep}\big|_{\cX_{b}}
$$
in general. The two only agree if $b$ is a fixed point of the $K$-action on $B$.

Next, we solve a fibrewise problem to improve the approximate solution and ultimately reduce the cscK equation to a finite dimensional problem. Let $J_{b}$ be the almost-complex structure on $M$ such that $\cX_b = (M, J_b)$.
\begin{proposition}
\label{prop:approxsoln}
 Up to shrinking $U$ and $B$, there exists a smooth map 
 $$
\phi : U \times B \times M \to \R
 $$
such that  for all $(\uep, b) \in U\times B$
$$
\scal(M, J_{b}, \om'_{\uep,b}+\ddb\phi_{\uep,b})\in \k^{\phi_{\uep,b}}_{\uep,b},
$$
where $\phi_{\uep,b}(x) = \phi(\uep, b, x)$ for $x \in M$.
\end{proposition} 
\begin{proof}
Let $W$ be an open subset of $C^{k+4,\alpha}(M)$ whose elements are K\"ahler potentials of regularity $C^{k+4,\alpha}$ for each $(\uep,b) \in U \times B$. Consider the map 
$$
\Phi : U \times B \times W \times \mathfrak{k} \times \R \to C^{k,\alpha}(M)
$$
given by 
$$
(\uep, b, \phi, v, c) \mapsto \scal(M, J_b, \omega'_{\uep} + \ddb_b(\phi)) - (\nu_{\uep}(v) + v(\phi)+c)_b.
$$
We seek to show that there is a zero of the above map for every sufficiently small $\uep \in U$ and $b \in B$ as $\nu_{\uep}(v) + v(\phi) \in \mathfrak{k}^{\phi}_{\uep,b}$.

For fixed $(\uep,b)$, the linearisation of the equation at $(0,0) \in W \times \mathfrak{k}$ is the map 
$$
\Psi_{\uep,b} : C^{k+4,\alpha}(M) \times \mathfrak{k} \times \R \to C^{k,\alpha}(M)
$$
given by
$$
(\phi, v, c) \mapsto L_{\uep,b}(\phi) - (\nu_{\uep}(v) + c)_b,
$$
where $L_{\uep,b}$ is the linearisation of the scalar curvature at $(J_b, \omega'_{\uep})$ when varying the K\"ahler form. This varies smoothly with $(\uep,b)$ and moreover when these two parameters are $0$, we have that $L_{0,0}$ is the negative of the Lichnerowicz operator of $(J_0, \omega)$, since this is $\scal(M,J_0,\omega)$ is constant. The Lichnerowicz operator is a self-adjoint operator whose kernel, and therefore co-kernel, consists precisely of the functions $\nu_{0}(v)+c$ for $v \in \mathfrak{k}$ and $c \in \R$. In particular, $\Psi_{0,0}$ is surjective and admits a right-inverse $Q_{0,0}$, e.g. by choosing the unique representative in $C^{k+4,\alpha}(M)$ which is orthogonal to all the $\nu(v) + c$. 

It follows that for all sufficiently small $(\uep,b)$, the operators $\Psi_{\uep,b}$ are surjective and admits right-inverses $Q_{\uep,b}$ whose operator norm satisfies 
$$
\frac{1}{2}\| Q_{0,0} \| \leq  \| Q_{\uep,b} \| \leq 2 \| Q_{0,0} \|.
$$
Moreover, as the operators $\Phi_{\uep,b}$ vary smoothly we can ensure that the $Q_{\uep,b}$ also do. Now, as $\omega'_{\uep,b} = \omega + O(|\uep|,|b|)$, we can shrink $U$ and $B$ such that the uniform estimate 
$$
\| \scal(M,J_b,\omega'_{\uep,b}) - \scal(M,J_0,\omega) \|_{C^{k+4, \alpha}} \leq C
$$
holds, for any desired constant $C>0$. Note also that $\scal(M,J_0,\omega)$ in fact is a constant. In particular, by picking $C$ sufficiently small, this allows us to apply the Quantitative Implicit Function theorem to deduce that there is a root of $\Phi$ for all such $(\uep,b)$, which varies smoothly with $(\uep,b)$ since $Q_{\uep,b}$ does. Elliptic regularity then implies that these are also smooth in $M$. This is what we wanted to show.
\end{proof}

\subsection{The finite dimensional problem}
\label{sec:findimproblem}
As the map $\phi$ is smooth, we can for each $\uep \in U$ define a smooth map $\phi_{\uep} : \cX \to \R$ whose restriction to $\cX_{b}$ is $\phi_{\uep,b}$, using that $\cX = B \times M$ as a smooth manifold. From now on we will let $\om_{\uep} = \omega'_{\uep} + \ddb \phi_{\uep}$, where we are relabelling $\om_{\uep}$ from Section \ref{sec:initial11form} to reduce the notation.

By Dervan--Hallam, we can for each $\uep \in U$ obtain a moment map $\sigma_{\uep}$ on $B$ in the following way. In general, let $p : Y \to V$ be a holomorphic submersion, with compact fibres of relative dimension $n$. The base $V$ does not need to be compact. Suppose a Lie group $K$ acts on $Y$ and $V$ making $p$ equivariant. Suppose further that there is a closed $(1,1)$-form $\omega_Y$ on $Y$ which is relatively K\"ahler and which admits a moment map $\nu_Y : Y \to \mathfrak{k}^*$. Let $\hat S_v$ be the average scalar curvature of the fibre $Y_v$ and let $\rho$ denote the relative Ricci form, the curvature of the Hermitian metric induced by $\omega_Y$ on the top exterior power of the vertical tangent bundle $\ker p$. Let $\langle \cdot , \cdot \rangle : \mathfrak{k}^* \times \mathfrak{k} \to \R$ denote the natural pairing of a dual vector and a vector.
\begin{theorem}[{\cite[Theorem 4.6]{dervanhallam23}}]
\label{thm:dervanhallam}
In the setting above, the map $\mu_V : V \to \mathfrak{k}^*$ given by
\begin{align}
\label{eq:futinv}
\langle \mu_V(v), \xi \rangle = \int_{Y_v} \langle \nu_Y, \xi \rangle (S(\omega_Y\big|_{Y_v}) - \hat S_v) \omega_Y^n
\end{align}
is a moment map for the $K$-action on $V$ with respect to the closed $(1,1)$-form
$$
\Omega = \frac{\hat S_v}{n+1} \int_{Y/V} \omega_Y^{n+1} - \int_{Y/V} \rho \wedge \omega_Y^{n}.
$$ 
\end{theorem}
The form $\Omega$ is the Weil--Petersson form on the base $V$. Note that in this generality it does not need to be positive. 

Now, if we apply Theorem \ref{thm:dervanhallam} to the metrics $\omega_{\uep}$ constructed in Proposition \ref{prop:approxsoln}, we then obtain moment maps $\mu_{\uep}$ with respect to the corresponding Weil--Petersson forms $\Omega_{\uep}$. In our case this form is positive. At $\uep = 0$, we have the usual Weil--Petersson form $\Omega$ on the Kuranishi space, which is a positive form. Since $\omega_{\uep} = \omega + O(|\epsilon|)$, it follows that the corresponding $\Omega_{\uep}$ satisfy this, too. Thus, the $\Om_{\uep}$ are positive for all sufficiently small $\uep$. So for each $\uep \in U$, we have a K\"ahler metric $\Omega_{\uep}$ on $B$ and a moment map for the $K$-action on $B$, which we will call $\mu_{\uep}$. 

Next, we claim that a zero of the moment map $\mu_{\uep}$ corresponds to a cscK metric on the fibre. Indeed, a zero of the moment map means the scalar curvature is orthogonal to the space $\k^{\phi_{\uep}}_{\uep}\big|_{\cX_{b}}$. If it were the case that this equals $\k^{\phi_{\uep,b}}_{\uep,b}$, then a zero of the moment map would clearly correspond to a cscK metric on the fibre, since the scalar curvature lies in $\k^{\phi_b}_{\uep,b}$. As remarked above this may not be true. However, a projection argument from \cite[Lemma 4.15]{ortusektnan24}, shows that as $\k^{0}_{\uep,b} = \k_{\uep}\big|_{\cX_{b}}$, the $L^2$-orthogonal projection $\k^{\phi_b}_{\uep,b} \to \k^{\phi}_{\uep}\big|_{\cX_{b}}$ with respect to $\omega_{\uep}$ is an isomorphism for all sufficiently small $\uep$, and so for such $\uep$, a zero of the moment map still has the same interpretation. 

In summary, we get the following, which reduces the existence of a cscK metric to a finite dimensional moment map problem.
\begin{lemma}
\label{lem:zeroofmomentmap}
 Up to shrinking $U$, we have that for all $\uep \in U$, the form $\Omega_{\uep}$ on $B$ is K\"ahler. Moreover, the corresponding moment map $\mu_{\uep}$ for the $K$-action on $B$ satisfies that $\mu_{\uep}(b) = 0$ if and only if $\scal(M, J_{b}, \om_{\uep,b})$ is constant. 
\end{lemma}

\section{Solving the finite dimensional problem : abelian case}
\label{sec:finitedimproblem}

We thus reduced the problem to finding zeros of $\mu_{\uep}$ on $B$. Our goal now is to relate the vanishing of $\mu_{\uep}$ to $K$-polystability of some fiber $(\cX_b,[\om_{\uep}])$, for some fixed $b\in B$. In this whole section, we assume that $K=T$ is a torus.

\subsection{Focusing on an orbit}
\label{sec:focusingorbit}
We start by restricting our study to the (local) $T^\C$-orbit of $b$ in $B$. By the Hilbert--Mumford criterion, there is a one parameter subgroup $\rho : \C^* \to T^\C$ such that
$$
\lim_{t\mapsto 0} \rho(t)\cdot b =b_0\in B
$$
and $b_0$ has a closed $T^\C$-orbit (this follows more precisely by a combination of \cite[Section 8.3]{Humphreys} that shows that the orbit closure of $b$ contains a closed orbit and \cite[Theorem 1.4]{Kempf} that implies that this closed orbit can be reached by a one-parameter subgroup). By \cite{gabor-deformations,ortu24}, $(\cX_{b_0}, [\om])$ admits a cscK metric. Moreover, $(\cX_b,[\om])$ is a small complex polarised deformation of $(\cX_{b_0}, [\om])$. Note that $h^{0,j}(\cX_{b_0})=h^{0,j}(\cX_0)$, and that by \cite[Corollary 3.2]{dervan-moduli}, $\Aut_0(\cX_{b_0}, [\om])\simeq T_{b_0}$. In particular,  $T_{b_0}$ is abelian. Hence, $(\cX_{b_0}, [\om])$ satisfies the same hypothesis as $(\cX_0, [\om])$ so we might, and we will, up to performing Kuranishi--Fujiki--Schumacher--Sz\'ekelyhidi's slice starting from $(\cX_{b_0}, [\om])$, assume that $b_0=0$. We will keep the notations $T$, $\mu_{\uep}$, etc for the group $T_{b_0}$, the associated moments maps, etc. 

We now reduce to the case where the stabiliser $T_b\subset T$ of $b\in B$ is trivial. Denote by $\t_b$ the Lie algebra of $T_b$.
 \begin{lemma}
  \label{lem:hamiltonianstabiliservanish}
  Let $(\uep,b)\in U\times B$ such that $(\cX_b, [\om_{\uep}])$ is $K$-polystable. Then, for all $b'\in(T^\C\cdot b )\cap B$, and for all $u\in \t_{b'}$, one has $\langle \mu_{\uep}(b'),u\rangle=0$.
 \end{lemma}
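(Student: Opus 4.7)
The plan is to show that any $u \in \k_{b'}$ generates a product test configuration for $(\cX_b, [\om_\uep])$ whose Donaldson--Futaki invariant is, up to a positive multiplicative constant, $-h_{u,\uep}(b')$. Then $K$-polystability applied both to this test configuration and to its inverse (generated by $-u$) forces the Futaki invariant, and hence the hamiltonian, to vanish.

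First, I would identify $(\cX_b, [\om_\uep])$ with the K\"ahler manifold $(M, J_{\uep,b'}, \om_\uep)$: since $b'$ lies in the $K^\C$-orbit of $b$, property (2) of Proposition \ref{prop:secondperturb} (and the fact that $K^\C$-conjugate points in $B$ yield isomorphic deformations of $X_0$) gives $\cX_b \simeq (M, J_{\uep,b'})$, with the class $[\om_\uep]$ preserved under this identification. Next, the assumption $u \in \k_{b'}$ means that the $S^1$-action generated by $u$ acts trivially on $b'\in B$, which by construction of the $K$-action on $\cJ$ is precisely the statement $\exp(tu)^* J_{\uep,b'} = J_{\uep,b'}$. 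Hence $\hat u$ is a real holomorphic vector field on $(M, J_{\uep,b'})$, and since $\om_\uep$ is $K$-invariant, the action is hamiltonian with hamiltonian $H_{u,\uep}$.

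Complexifying this $S^1$-action yields a $\C^*$-action on $(M, J_{\uep,b'})$ preserving $[\om_\uep]$. The associated product test configuration $(\cX, \cA) := ((M,J_{\uep,b'}) \times \C, \pi_1^*[\om_\uep])$ for $(\cX_b, [\om_\uep])$ has central fiber $(M, J_{\uep,b'}, \om_\uep)$ with $S^1$-generator $\hat u$, so the definition of the Donaldson--Futaki invariant from Section \ref{sec:backgroundKstability} gives
$$
\mathrm{DF}(\cX,\cA) \;=\; -\int_M H_{u,\uep}\,(\scal(J_{\uep,b'},\om_\uep) - \hat s_\uep)\,\frac{\om_\uep^n}{n!} \;=\; -\frac{1}{n!}\, h_{u,\uep}(b'),
$$
using the formula \eqref{eq:formulahamiltonians} for $h_{u,\uep}$. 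Replacing $u$ by $-u$ produces the inverse product test configuration with Donaldson--Futaki invariant $+\tfrac{1}{n!}h_{u,\uep}(b')$. By the hypothesis that $(\cX_b,[\om_\uep])$ is $K$-polystable, both Donaldson--Futaki invariants must be non-negative, forcing $h_{u,\uep}(b')=0$.

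The main subtlety to check is that the two occurrences of "hamiltonian of $u$" — the one in the definition of $h_{u,\uep}$ in \eqref{eq:formulahamiltonians} and the one $H_u$ appearing in the DF formula of Section \ref{sec:backgroundKstability} — genuinely coincide. Both are determined by $\iota_{\hat u}\om_\uep = dH_{u,\uep}$ together with the normalization $\int_M H_{u,\uep}\,\om_\uep^n = 0$, which depends only on the symplectic form $\om_\uep$ and not on the choice of compatible complex structure; this is also consistent with the explicit expression $H_{u,\uep} = G_\uep d^*_\uep \iota_{\hat u}\om_\uep$ recorded in the proof of Proposition \ref{prop:secondperturb}. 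Beyond this bookkeeping, the argument is standard: it is essentially the classical Futaki obstruction, now phrased in the perturbed setting.
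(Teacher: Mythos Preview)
Your proof is correct and follows essentially the same approach as the paper's: both identify $h_{u,\uep}(b')$ with the classical Futaki invariant of the holomorphic vector field $\hat u$ on $(\cX_{b'},[\om_\uep])\simeq(\cX_b,[\om_\uep])$, and then use $K$-polystability (applied to the product test configurations generated by $u$ and $-u$) to force it to vanish. The paper's proof simply states this in one line, while you spell out the construction of the product test configuration and the matching of hamiltonians; this is a matter of detail rather than of strategy.
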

\begin{proof}
If $b'\in(T^\C\cdot b )\cap B$, then $\cX_b$ and $\cX_{b'}$ are isomorphic, so $\cX_b$ is $K$-polystable if and only if $\cX_{b'}$ is. Thus it is enough to show that for all $u\in \t_b$, the hamiltonian $\langle \mu_{\uep},u\rangle=0$  vanishes at $b$. But by construction
\begin{equation}
 \label{eq:futaki stabiliser}
\langle \mu_{\uep}(b),u\rangle=\int_{\cX_b} \langle \nu_{\uep}, u \rangle (S(\omega_{\uep,b}) - \hat S_b) \omega_{\uep}^n.
\end{equation}
As $b$ is a fixed point under the $T_b$-action, for any $u\in\t_b$ this
is the classical Futaki invariant of the holomorphic vector field associated to $u$, which therefore vanishes by the polystability assumption.
\end{proof}
 As elements in $T_b$ fix all the points in $B\cap \overline{T^\C}\cdot b$, and as by Lemma \ref{lem:hamiltonianstabiliservanish} the hamiltonians $\langle \mu_{\uep},u\rangle$ for $u\in \t_b=\Lie(T_b)$ vanish if we assume $K$-polystability, elements in $\t_b$ will play no role in the arguments that follow. Hence, working instead with the torus $T/T_b$, the maps $\mu_{\uep}$ take values in $(\t/\t_b)^*$, so we may, and we will, assume that $T_b$ is trivial.

 We now set 
 $$V:= H^{0,1}_\om(T^{1,0}),$$
 the first cohomology group as defined in Section \ref{sec:deformationsandslice}, associated to the complex structure of $\cX_0$. There is a weight decomposition under the $T$-action
 \begin{equation}
  \label{eq:weightdecomposition}
 V:= \bigoplus_{m\in \M} V_m
 \end{equation}
 for $\M\subset \t^*$ the lattice of characters of $T$. This means that if we decompose $b\in V$ accordingly as
 $$
 b=\sum_i b_{m_i},
 $$
then the action of a one-parameter subgroup generated by $v\in \t$ is given by
\begin{equation}
 \label{eq:explicitTaction}
t\in\C^* \mapsto \sum_i t^{\langle m_i, v \rangle} b_{m_i},
\end{equation}
where $\langle \cdot, \cdot \rangle$ is the natural duality pairing on $\t^*\times \t$.
Finally, we will restrict our setting to
 $$
 B\cap \bigoplus_{b_{m_i\neq 0}} V_{m_i},
 $$
which we still denote by $B$. That is, we only consider weight spaces actually appearing in the decomposition of $b$.

\subsection{A local rigidity result for families of moment maps}
\label{sec:localrigidity}
We next state a local rigidity result for the images of a complexified torus orbit under a continuous family of moment maps. Here is the setting of last section. We have :
\begin{enumerate}
 \item[($R_1$)] An effective and holomorphic action of a compact torus $T$ on a complex vector space $V$;
 \item[($R_2$)] A continuous family of symplectic forms $(\Om_{\uep})_{\uep\in U}$ on a ball $B\subset V$ around the origin, with respect to which the $T$-action is hamiltonian;
 \item[($R_3$)] A point $b\in B$ with trivial stabiliser, $0$ in its $T^\C$-orbit closure, and such that for all weight $m\in \M$ appearing in the weight space decomposition of $V$, $b_m\neq 0$.
 \item[($R_4$)]  The restriction of the symplectic form $\Om_0$ to the $T^\C$-orbit of $b$ is non-degenerate.
\end{enumerate}
\begin{remark}
Note that in what follows, we will not use the fact that the forms $\Om_{\uep}$ are K\"ahler with respect to the complex structure on $V$. The only hypothesis we need instead is the weaker $(R_4)$. 
\end{remark}
We still denote the equivariant moment mappings $\mu_{\uep} : B \to \t^*$, and we set $$\mathcal{Z}:=B\cap(T^\C\cdot b)$$ and $$\ocZ:=B\cap(\overline{T^\C\cdot b}).$$
We also introduce 
$$
\sigma:= \sum_{b_{m_i}\neq 0} \R_+ \cdot m_i\subset \t^*
$$
with $\lbrace m_i \rbrace$ the weights of the decomposition (\ref{eq:weightdecomposition}) under the torus action,
and for $\eta >0$ 
$$
\sigma_\eta := \sum_{b_{m_i}\neq 0} [0,\eta ) \cdot m_i\subset \t^* .
$$
Then, under the assumptions $(R_1)-(R_4)$, we have :
 \begin{proposition}
 \label{prop:sigma-eta-image-orbit}
  Up to shrinking $B$ and $U$, there exists $\eta>0$ such that for all $\uep\in U$,
  $$
  \mu_{\uep}(0)+ \sigma_\eta  \subset \mu_{\uep}(\ocZ).
  $$
 \end{proposition}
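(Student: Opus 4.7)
The plan is to establish the claim at $\uep = 0$ via a direct moment-map analysis on the $T^\C$-orbit of $b$, and then extend to nearby $\uep$ by a continuity argument on the family.

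\emph{Step 1 ($\uep = 0$).} By $(R_3)$ and $(R_4)$, $(\mathcal{Z}, \Om_0)$ is a free Hamiltonian $T$-manifold of real dimension $2\dim T$. Parametrise a slice transverse to $T$-orbits in $\mathcal{Z}$ by $\psi : \t \to \mathcal{Z}$, $\psi(v) := a(v) \cdot b$, where $a(v) \in T^\C$ is the element acting as multiplication by $e^{\langle m, v\rangle}$ on $V_m$. In the linear model where $\Om_0$ is the standard Hermitian form on $V$, the weight decomposition $(R_1)$ gives
\[
\mu_0^*(\psi(v)) = \mu_0^*(0) + \sum_i e^{2\langle m_i, v\rangle}\,\|b_{m_i}\|^2\, m_i,
\]
whose differential at $v$ is the symmetric bilinear form $(w_1, w_2) \mapsto 2\sum_i \langle m_i, w_1\rangle\langle m_i, w_2\rangle e^{2\langle m_i, v\rangle}\|b_{m_i}\|^2$ on $\t$, which is positive-definite since the $m_i$ span $\t^*$ by effectiveness of the $T$-action in $(R_1)$. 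For a general $\Om_0$ satisfying $(R_4)$, the composition $F_0 := \mu_0^* \circ \psi$ still has non-degenerate differential on $\t$, directly from non-degeneracy of $\Om_0$ on $\mathcal{Z}$, so its image covers an open neighbourhood of $\mu_0^*(0)$ in the interior of the cone $\mu_0^*(0) + \sigma$. The boundary faces of $\mu_0^*(0) + \sigma_\eta$ are realised as limits $\psi(v_n) \to z \in \ocZ \setminus \mathcal{Z}$ along smaller $T^\C$-orbits, where the weight coordinates corresponding to that face drop to zero. Combined, this gives $\mu_0^*(0) + \sigma_\eta \subset \mu_0^*(\ocZ)$ for some $\eta > 0$.

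\emph{Step 2 (family extension).} Set $G_\uep : \ocZ \to \t^*$, $G_\uep(z) := \mu_\uep^*(z) - \mu_\uep^*(0)$. Since $\ocZ$ is compact (as a closed subset of the bounded ball $B$) and $\uep \mapsto \Om_\uep$ is continuous, $G_\uep$ converges uniformly to $G_0$. By Step 1, $\sigma_\eta \subset G_0(\ocZ)$, with $G_0$ of full rank on the open orbit $\mathcal{Z}$. For $y$ in the interior of $\sigma_\eta$, the implicit function theorem applied at a preimage $z_0 \in G_0^{-1}(y) \cap \mathcal{Z}$ produces $z_\uep$ with $G_\uep(z_\uep) = y$ for small $\uep$. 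Upgrading this to uniform surjection onto $\sigma_\eta$ uses a degree-theoretic argument: after shrinking $\eta$ slightly, one finds a compact $K \subset \ocZ$ with $G_0(K) \supset \overline{\sigma_\eta}$ and $G_0(\partial K) \cap \sigma_\eta = \emptyset$, so local mapping degrees persist under the small perturbation $G_\uep$, forcing $\sigma_\eta \subset G_\uep(K) \subset G_\uep(\ocZ)$. Behaviour along boundary faces of $\sigma_\eta$ is treated inductively: each face corresponds to a smaller orbit closure inside $\ocZ$ that itself satisfies an analogue of $(R_1)$–$(R_4)$ for a sub-torus.

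\emph{Main obstacle.} The delicate point is making the family argument uniform at the boundary of $\sigma_\eta$, where preimages in $\ocZ$ escape from the open orbit $\mathcal{Z}$ into lower-dimensional toric strata. The implicit function theorem handles the open orbit cleanly via $(R_4)$, but transitions between strata require a global argument compatible with the stratification — either a careful degree-theoretic calculation, or an induction over the face structure of $\sigma$, exploiting that each boundary stratum inherits a compatible sub-setup of the rigidity data $(R_1)$–$(R_4)$.
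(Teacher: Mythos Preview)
Your proposal has a genuine gap, which you yourself flag as the ``main obstacle'': the degree-theoretic or inductive stratum argument in Step 2 is only gestured at, not carried out. The difficulty is real --- $\ocZ$ is a stratified space rather than a manifold, so mapping degree does not apply directly, and producing a compact $K$ with $G_0(\partial K) \cap \sigma_\eta = \emptyset$ is problematic precisely because the image of $G_0$ near the vertex \emph{is} the cone $\sigma$, whose boundary meets $\sigma_\eta$. Step 1 is also incomplete: knowing $F_0$ is a local diffeomorphism on $\t$ shows its image is open in $\Int(\sigma)$, but not that this open set fills all of $\sigma_\eta \cap \Int(\sigma)$ down to the vertex.

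The paper sidesteps the perturbation-from-$\uep=0$ framework altogether. It applies the equivariant Darboux theorem \emph{for each fixed $\uep$}, obtaining the explicit normal form $\mu_{\uep}^*(b') = \mu_{\uep}^*(0) + \sum_i \|b'_i\|^2_{\uep}\, m_i$; continuity in $\uep$ is then used only to guarantee a uniform $\eta$ and to propagate the non-degeneracy $(R_4)$ to nearby $\uep$. Surjectivity onto $\mu_\uep^*(0)+\sigma_\eta$ is then obtained, for each $\uep$ separately, by: (i) the restriction of $\mu_\uep^*$ to $\cZ$ is a submersion (your non-degeneracy observation); (ii) for each ray $\R_+ m_i \in \sigma(1)$, an explicit boundary point of $\ocZ$ maps onto that ray, found by choosing $v$ in the relative interior of the dual facet $H_{m_i} \cap \sigma^\vee$ and taking the limit of the corresponding one-parameter subgroup; (iii) the convex hull $\Delta$ of $\mu_\uep^*(0)$ and these ray points lies in $\mu_\uep^*(\ocZ)$ by an open-and-closed argument along line segments --- openness from (i), closedness from compactness of $(\mu_\uep^*)^{-1}(\Delta)$ in $B$. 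Since $\mu_\uep^*(0)+\sigma_\eta \subset \Delta$ for small $\eta$, the result follows. The key idea your attempt is missing is this segment-wise convexity argument, which replaces all of the degree and stratification machinery.
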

Note that by the local version of Atiyah and Guillemin--Sternberg convexity theorem, combining the equivariant Darboux Theorem (\cite[Theorem 3.2]{Dwivedi}) and the local description of linear hamiltonian torus actions (\cite[Section 7.1]{Dwivedi}), there are $\eta_{\uep}>0$ and open neighbourhoods $0\in B_{\uep}\subset B$ such that for all $\uep$,
 $$
 \mu_{\uep}(B_{\uep})= \mu_{\uep}(0)+ \sum_i [0,\eta_{\uep}) \cdot m_i .
 $$
 By continuity of $\uep\mapsto \mu_{\uep}$, we can find a uniform $\eta$ (up to shrinking $B$ and $U$) so that for all $\uep\in U$,
 $$
 \mu_{\uep}(0)+ \sum_i [0,\eta) \cdot m_i  \subset \mu_{\uep}(B).
 $$
 Hence, the content of Proposition \ref{prop:sigma-eta-image-orbit} is to preserve this inclusion when restricting to $\ocZ$. As the proof  is quite technical, we postpone it to Appendix \ref{sec:appendix}.
 \begin{remark}
\label{rem:sjamaar}
In a first version of this work, a different strategy was used to produce symplectic forms $\Om_{\uep}'$ on $B$. In particular, those forms were not compatible with the complex structure on $V$. Hence we couldn't use the results in \cite[Sections 4 and 6]{Sjamaar} directly. This is why instead we use a more direct argument, that requires an explicit knowledge of the $T$-action, and this is the main reason for our hypothesis $K=T$. We expect however that a generalisation of Proposition \ref{prop:sigma-eta-image-orbit} should hold for general compact connected groups $K$, considering the intersection of the images of the moment maps with a fixed positive Weyl chamber, and replacing $\sigma$ with an appropriate cone, as in \cite[Theorem 6.5]{Sjamaar}. With our new approach, as now the forms $\Om_{\uep}$ are compatible with the complex structure on $V$, Sjamaar's results could be used to relax our asumption on $K$ being abelian. We will instead provide a different proof of this fact in Section \ref{sec:solve general}.
 \end{remark}
 We will also need the following lemma.
 \begin{lemma}
  \label{lem:sigmaproperties}
  The cone $\sigma\subset \t^*$ is a strongly convex rational polyhedral cone of maximal dimension $\dim(\t^*)$. Hence, so is its dual cone $\sigma^\vee$.
 \end{lemma}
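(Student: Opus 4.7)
The plan is to verify separately that $\sigma$ is (i) rational polyhedral, (ii) of maximal dimension, and (iii) strongly convex, and then to invoke the standard duality theory for rational polyhedral cones to transfer all three properties to $\sigma^\vee$.

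First, property (i) is immediate from the definition: $\sigma$ is the non-negative span of finitely many elements of the character lattice $M \subset \t^*$, hence is by definition a rational polyhedral cone.

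For property (ii), I would show that the set of weights $\{m_i : b_{m_i} \neq 0\}$ spans $\t^*$ as a real vector space. If not, there exists a non-zero $v \in \t$ with $\langle m_i, v\rangle = 0$ for every such $i$. But then, by the explicit description of the $T^\C$-action in \eqref{eq:explicitTaction}, the one-parameter subgroup generated by $v$ acts trivially on each non-zero component $b_{m_i}$, and hence fixes $b$. This contradicts the triviality of the stabiliser of $b$ assumed in $(R_3)$. Since $\sigma$ is the conical hull of a spanning set, $\dim \sigma = \dim \t^*$.

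For property (iii), I would use the Hilbert--Mumford criterion together with the assumption from $(R_3)$ that $0 \in \overline{T^\C \cdot b}$. This produces a one-parameter subgroup $\rho : \C^* \to T^\C$, generated by some $v \in \t$, with $\lim_{t \to 0} \rho(t) \cdot b = 0$. Using \eqref{eq:explicitTaction} again, this limit forces $\langle m_i, v \rangle > 0$ for every $i$ with $b_{m_i} \neq 0$. Hence $v$ is a point of $\sigma^\vee$ at which all defining inequalities of $\sigma^\vee$ are strict, so $v$ lies in the topological interior of $\sigma^\vee$. It is a classical fact of convex geometry that a polyhedral cone $\sigma$ is strongly convex (that is, contains no affine line, or equivalently $\sigma \cap (-\sigma) = \{0\}$) if and only if its dual $\sigma^\vee$ has non-empty interior; applying this here gives strong convexity of $\sigma$.

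Finally, to deduce the same three properties for $\sigma^\vee$, I would invoke the standard duality: the dual of a rational polyhedral cone is rational polyhedral (Gordan's lemma / Farkas), strong convexity of $\sigma$ is equivalent to maximal dimensionality of $\sigma^\vee$, and maximal dimensionality of $\sigma$ is equivalent to strong convexity of $\sigma^\vee$. There is no real obstacle here: the entire argument is convex geometry plus one direct application of Hilbert--Mumford to extract an interior point of $\sigma^\vee$; the lemma is purely preparatory for the actual work in Proposition \ref{prop:sigma-eta-image-orbit}.
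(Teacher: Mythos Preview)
Your proposal is correct and follows essentially the same approach as the paper: rationality is immediate, maximal dimension comes from the trivial-stabiliser assumption via the explicit action \eqref{eq:explicitTaction}, strong convexity comes from the existence of a $v\in\t$ with $\langle m_i,v\rangle>0$ for all $i$ (which the paper extracts directly from $0\in\overline{T^\C\cdot b}$ rather than naming Hilbert--Mumford), and the statement for $\sigma^\vee$ is standard duality. The only cosmetic difference is that the paper argues strong convexity directly from $\sigma\cap(-\sigma)=\{0\}$ rather than via non-empty interior of $\sigma^\vee$, but these are trivially equivalent.
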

 Recall that by definition, 
 $$
 \sigma^\vee=\lbrace v\in \t\,\vert\:\forall m\in \sigma,\; \langle m , v \rangle \geq 0 \rbrace.
 $$
\begin{proof}
 The cone $\sigma$ is a rational polyhedral cone by definition. As $0\in \overline{T^\C\cdot b}$, and as the action of $t\in T^\C$ on $b$ is given by
 \begin{equation}
  \label{eq:action}
 \sum_i t^{\langle m_i, \cdot \rangle}b_{m_i},
 \end{equation}
 we know that there is $v\in \t$ such that the action of $v$ sends $b$ to $0$. That is, there is $v\in \t$ such that for all $i$,
 $$
 \langle m_i, v \rangle >0.
 $$
 But then $\sigma$ cannot contain any line, as any point $x\in \sigma\cap (-\sigma)$ would give $\langle x , v \rangle >0$ and $\langle -x, v \rangle >0$, which is absurd. Finally, $\dim(\sigma) = \dim(\t^*)$ follows from the fact that any element $v\in\sigma^\perp$ belongs to the stabiliser of $b$ by the description of the $T^\C$-action at $b\in B$. As $b$ is supposed to have trivial stabiliser, we deduce that $\dim(\sigma^\perp)=0$, and thus $\dim(\sigma)=\dim(\t^*)$ by duality. The statement about $\sigma^\vee$ follows from general theory of polyhedral cones, see e.g. \cite[Appendix]{Oda}.
\end{proof}
The previous lemma has several interesting consequences. First, there is a finite set of rational elements $\lbrace v_1,\ldots, v_\ell\rbrace\in \t^\ell$ such that
$$
\sigma^\vee = \sum_j \R_+\cdot v_j.
$$
Then, the facets of $\sigma$ (resp. $\sigma^\vee$) are of the form $\sigma\cap H_{v_i}$ (resp. $\sigma^\vee\cap H_{m_i} $), for  some of the $m_i$'s (resp. $v_i$'s), where we set $H_v=\lbrace m\in\t^*, \: \langle m , v\rangle = 0 \rbrace$ (resp. $H_m=\lbrace v\in\t, \: \langle m , v\rangle = 0 \rbrace$). In general, a face of $\sigma^\vee$ is of the form 
$$
(\cap_i H_{m_i})\cap \sigma^\vee
$$
for some $m_i$'s. Finally, note by construction that the rays $\rho\in\sigma(1)$ (that is $1$-dimensional faces of $\sigma$) are all of the form $\R_+\cdot m_i$, for the $m_i$'s such that $\sigma^\vee\cap H_{m_i}$ is a facet of $\sigma^\vee$.

\subsection{Solving the problem}
\label{sec:solvingproblem}
We can now conclude the proof of Theorem \ref{thm:main}, in the abelian case. We restrict to variations in a single direction in the K\"ahler cone. Let 
$$
\alpha = \sum_i \lambda_i \alpha_i\in \cH^{1,1}(X_0,\R).
$$
We set $\om_\ep=\om+\ep \alpha$. There is $\ep_0 >0$ such that for $\ep\in (-\ep_0, \ep_0)$, $(\ep \lambda_i)_{1\leq i\leq r}\in U$. We denote by a subscript $\ep$ all quantities depending on $\uep := (\ep \lambda_i)_{1\leq i\leq r}$.  Our goal is to prove that up to shrinking $\ep_0$, for all $\vert \ep\vert < \ep_0$, if $(\cX_b, [\om_\ep])$ is $K$-polystable, then it carries a cscK metric. 

From the previous sections, to produce a cscK metric, it is enough to find $b'\in \ocZ$ such that $\mu_\ep(b')=0$. Indeed, if such a $b'$ is in $\cZ$, then $(M, J_{b'}, \om_{\ep,b})$ is cscK (cf Lemma \ref{lem:zeroofmomentmap}) and isomorphic to $\cX_b$ by the properties of the slice in Proposition \ref{prop:firstslice}. If $b'$ is in the boundary of $\ocZ$, then $(\cX_b, [\om_\ep])$ is a small polarised deformation of $(\cX_{b'}, [\om_\ep])$. As the latter is cscK, if the former is $K$-polystable, we can conclude from \cite{gabor-deformations,ortu24} that $(\cX_b, [\om_\ep])$ carries a cscK metric.

We fix $\eta$ as in Proposition \ref{prop:sigma-eta-image-orbit}. Then, it is is enough to prove that $K$-polystability implies $0\in \mu_\ep(0)+\sigma_\eta$ to complete the proof. But this is equivalent to 
$$
- \mu_\ep(0)\in \sigma_\eta.
$$
Up to shrinking $\ep_0$, and as $\mu_0(0)=0$, we only need to prove that $K$-polystability implies
$$
- \mu_\ep(0)\in \sigma.
$$
By the duality Theorem (\cite[Appendix, Theorem A.1]{Oda}), we have 
$
\sigma^{\vee\vee}=\sigma,
$
where $\sigma^\vee= \sum_j \R_+\cdot v_j$ is the dual of $\sigma$, for some finite set of rational elements $\lbrace v_1,\ldots, v_\ell\rbrace\in \t^\ell$. Hence, we need to show that $K$-polystability implies 
$$
-\langle \mu_\ep(0), v_j \rangle \geq 0,
$$
for all $j\in\lbrace 1,\ldots,\ell\rbrace.$ But any such $v_j$ lies in $\sigma^\vee$, and thus satisfies $\langle m_i, v_j \rangle \geq 0$ for all $m_i$ such that $b_{m_i}\neq 0$. By Equation (\ref{eq:explicitTaction}), each $v_j$ then generates a $1$-parameter subgroup $\rho_j : \C^* \to T^\C$ with 
$$
\lim_{t\mapsto 0} \rho_j(t)\cdot b = b_j \in \ocZ.
$$

From \cite[proof of Theorem 2]{gabor-deformations}, one can produce out of $\rho_j$ a regular test configuration $(\cX_j,\cA_{j,\ep})$ for $(\cX_b, [\om_\ep])$ with Futaki invariant given by (cf Equation (\ref{eq:futaki stabiliser})
$$
\mathrm{DF}(\cX_j,\cA_{j,\ep})=-\langle \mu_\ep(b_j), v_j \rangle.
$$
If $(\cX_b, [\om_\ep])$ is $K$-polystable, then we must have 
$$
\mathrm{DF}(\cX_j,\cA_{j,\ep}) \geq 0.
$$
 Moreover, the quantity $-\langle \mu_\ep(b_j), v_j \rangle$ is the Futaki invariant of $v_j$ that belongs to the stabiliser $\t_{b_j}$. Hence, from the independence of the Futaki invariant on the metric in a fixed K\"ahler class (\cite{futaki}), the map $b'\mapsto \langle \mu_\ep(b'), v_j \rangle$ is invariant along the $T^\C$-orbit of $b_j$. Note that $0$ belongs to the $T^\C$-orbit closure of $b_j$ (we can use the same $\C^*$-action that sends $b$ to $0$ to send any $b_j$ to $0$, as shown by Equation (\ref{eq:explicitTaction})). Thus, from these two observations, we see that
 $$
 -\langle \mu_\ep(0), v_j \rangle=\mathrm{DF}(\cX_j,\cA_{j,\ep})\geq 0.
 $$
which concludes the proof.
 
 \subsection{The Futaki invariant and local wall-crossing}
 \label{sec:invariantsandwallcrossing}
 In this section, we describe the local wall-crossing phenomena alluded to in the introduction. First, from the previous proof, it follows that for $\ep$ small enough, $(\cX_b, [\om_\ep])$ is $K$-polystable if and only if $\mathrm{Fut}_{[\om_\ep]}(\cX_b,\cdot )=0$ and for $j \in \lbrace 1,\ldots, \ell \rbrace$, one has $\mathrm{Fut}_{[\om_\ep]}(X_0, v_j)>0$, where $\mathrm{Fut}_{[\om_\ep]}(X ,\cdot)$ denotes the classical Futaki invariant of $(X, [\om_\ep])$. By invariance and continuity of the Futaki invariant, this is equivalent to 
 \begin{equation}
  \label{eq:finiteconditions}
\left\{ 
\begin{array}{ccccc}
\mathrm{Fut}_{[\om_\ep]}(X_0, v) & = & 0 & \mathrm{ for } & v \in \t_b, \\
\mathrm{Fut}_{[\om_\ep]}(X_0, v) & > & 0 &\mathrm{ for } & v \in \sigma^\vee.
\end{array}
\right.
 \end{equation}
While $K$-polystability immediately implies those conditions, the converse follows as those conditions implies the existence of a cscK metric from our proof of Theorem \ref{thm:main}, which then ensures $K$-polystability (\cite{stoppa,dervan-relative}). 

There are several interesting consequences from such a characterisation of $K$-polystability. First, observe that conditions $(R_1)-(R_4)$ required for Proposition \ref{prop:sigma-eta-image-orbit} are satisfied for {\it any} point $b'=\sum_{m_i} b'_{m_i}$ with same non-vanishing coordinates $b'_{m_i}$ as $b$ in the weight space decomposition. Hence, the result of Proposition \ref{prop:sigma-eta-image-orbit} holds true uniformly in $b$ for any compact family of such points. Since $T$ is a torus, the stabilisers $\t_{b'}$ of those points are all equal to $\t_b$, as they are simply 
$$(\mathrm{Span}_\R\lbrace m_i, \; b_{m_i}\neq 0 \rbrace)^\perp\subset \t,$$
and our proof works uniformly in $b$ for $b$ varying in a compact set $S$ of the above kind. More precisely, for such a compact family of polarised manifolds $(\cX_b, [\om_\ep])_{b\in S }$, there is $\ep_0 >0$ such that for $\vert \ep \vert  < \ep_0$, $K$-polystability of $(\cX_b, [\om_\ep])$ implies the existence of a cscK metric and is equivalent to the  System (\ref{eq:finiteconditions}). But conditions (\ref{eq:finiteconditions}) are independent of $b$ in this compact family. Hence, we deduce that if {\it one} of the $(\cX_b, [\om_\ep])$ is $K$-polystable for $b\in S$, then {\it all} of them are. For $\t_b=\lbrace 0 \rbrace$, this is simply openness of $K$-stability, and is implied by openness of the existence of a cscK metric when the reduced automorphism group is trivial.

 \section{Solving the finite dimensional problem : general case}
 \label{sec:solve general}
We now show how to deduce the existence of a cscK metric from K-polystability in general, using the approach of Ortu. The argument of Ortu in \cite[Theorem A.1]{ortu24} deals with the case $\uep = 0$. The argument relies on the moment map flow. For simplicity we first present the argument for the case when $b$ has discrete stabiliser. See Remark \ref{rem:nontrivstab} for details on how to adapt the argument in the general case. 

The moment map flow is the flow on $B$ given by
$$
\frac{d}{dt} b_t  = J(\xi(\mu_{\uep}(b_t)))
$$
with initial condition $b_0=b$, where $\xi(\mu_{\uep}(b_t))$  is the infinitesimal vector field on $B$ associated to $\mu_{\uep}(b_t)$. Crucially, after potentially shrinking $B$, there is a neighbourhood about the origin in $B$ such that the moment flow stays in this neighbourhood for all time. This follows if the origin is in the orbit closure of the starting point $b$. For our $b$ of interest this may not be true for the initial Kuranishi family about $\cX_0$. However, after an application of the Luna slice theorem \cite{luna} we may replace $\cX_0$ by another central fibre $\cX_0'$ which is cscK with respect to the initial polarisation $\omega$ such that $\cX_b$ lies in the Kuranishi family of $\cX_0'$ and $0$ now is in the orbit closure of $b$ on the corresponding base family, which we still denote $B$. For the flow at $\uep = 0$, there is a ball about the origin such that the flow is inward pointing on the boundary of this ball, with a uniform lower bound for the length of the inward pointing tangent vector (see \cite[Proposition 4.5]{DMS}). Such a bound persists for all $\uep$ sufficiently close to the origin, since the K\"ahler form and moment map on $B$ then is a small perturbation of that at $\uep = 0$, ensuring that the flow stays in the same ball for all such $\uep$. This in particular implies convergence of the flow to some $b_{\infty} \in B$, see  \cite[Proposition 2.4]{ortu24}. 

From \cite[Corollary 4.14]{DMS} and \cite[Proposition 2.6]{ortu24}, either 
\begin{itemize}
\item the limit point is a zero of the moment map and is in the $K^{\mathbb{C}}$-orbit of $b$, or;
\item the limit point does not lie in the $K^{\mathbb{C}}$-orbit of $b$, and there exist a $b' \in B$ and $v \in \k$ such that  
$$
\lim_{t \to \infty} \exp(-itv.b) = b'
$$
and $\langle \mu_{\uep}(b'), v\rangle \geq 0$.
\end{itemize}

In the first case, we produce a cscK metric, by Lemma \ref{lem:zeroofmomentmap}. We therefore want to use K-polystability to rule out the second case. This is done by identifying $\langle \mu_{\uep}(b'), v\rangle$ with the Donaldson-Futaki invariant of a test configuration in the following way. The holomorphic vector field induced by $v$ determines a non-trivial test configuration $(\cW, [\alpha_{\uep}])$ for $(\cX_b, [\omega_{\uep,b}])$, with smooth central fibre $(\cX_{b'}, [\omega_{\uep,b'}])$. Its Donaldson-Futaki invariant is therefore given by the negative of the Futaki invariant of the central fibre. But the Futaki invariant of $v$ on $\cX_{b'}$ is precisely given by the formula \eqref{eq:futinv} for the value of the moment map $\mu_{\epsilon}$ at $b'$ paired with $v$. K-polystability of $\cX_b$ with respect to $[\omega_{\uep,b}]$ implies that this has to be negative, which rules out the second case in the dichotomy above.

\begin{remark}
\label{rem:nontrivstab}
In the case when $b$ has non-discrete stabiliser, one has to adapt the argument by considering the moment map flow with respect to a complementary torus $T^{\perp}$ to a maximal torus in the stabiliser of $b$. The flow then converges to a zero of the moment with respect to this complementary torus, and further this implies that there is a critical point $b'$ of the original moment map in the orbit of the limiting point. Using that the limiting fibre has to be isomorphic to that of $(\cX_b, [\omega_{\uep,b}])$ since this is K-polystable, Ortu showed that $b'$ is actually a zero of the original moment map, hence the fibre over it has a cscK metric. Since $(\cX_{b'}, [\omega_{\uep,b'}]) \cong (\cX_b, [\omega_{\uep,b}])$, we therefore obtain the required cscK metric. We refer to \cite{ortu24} for the complete details on this argument.
\end{remark}

\section{An example}
\label{sec:examples}

Consider the toric surface $\P^1\times \P^1$ with torus action 
$$
\begin{array}{ccc}
(\C^*)^2 \times \P^1\times \P^1 & \to & \P^1\times \P^1 \\
(\alpha, \beta), ([x_1, y_1] , [x_2, y_2]) & \mapsto & ([\alpha x_1, y_1] , [\beta x_2, y_2]).
\end{array}
$$
Denote by $\check \om$ the product of the Fubini--Study metrics on each factor $\P^1$. Let $\pi : X_0\to \P^1\times \P^1$ be the blow-up at the four torus fixed points. By the work of Arezzo--Pacard (\cite{arezzo-pacard1,arezzo-pacard2}), we know that for small $\delta>0$ , $X_0$ carries a cscK metric in the class
$$
[\om_0]:=[\pi^*\check \om]-\delta(E_{0,0} +  E_{\infty,0}+ E_{0,\infty}+ E_{\infty,\infty}),
$$
wher $E_{i,j}$ is (the Poincar\'e dual of) the exceptional divisor associated to the blow-up of the point $(i,j)\in\P^1\times\P^1$. 

From \cite{rt}, following works of Ilten and Vollmert \cite{iv}, we know that the reduced automorphism group of $X_0$ is isomorphic to the $2$-dimensional torus, acting on $H^1(X_0,T^{1,0})\simeq H^{0,1}_{\om_0}(X_0,T^{1,0})\simeq \C^4$ through the representation
$$
\left[ 
\begin{array}{cccc}
                                          \alpha & 0 & 0 & 0 \\
                                            0  & \alpha^{-1} & 0 & 0 \\
                                             0 & 0 & \beta & 0 \\
                                               0 & 0 & 0 & \beta^{-1}
                                         \end{array} 
\right],
$$
where the choice of coordinates $b=(b_1,\ldots, b_4)\in \C^4$ is such that the complex deformation associated to $b\in B$, for $B$ a small neighbourhood of the origin, corresponds to the blow-up of $\P^1\times\P^1$ along the points $([b_1:1],[b_3:1])$, $([1:b_2],[1:b_4])$,  $(0,\infty)$ and $(\infty,0)$. As $X_0$ is toric, $h^{0,1}(X_0)=h^{0,2}(X_0)=0$ (\cite[Section 3.3]{Oda}), and as it is a toric surface, $H^2(X_0,TX_0)$ vanishes (see \cite[Corollary 1.5]{ilten}). We can consider the polarised complex deformation $(X, [\om_0])$ corresponding to the point $b=(0, 0, b_3, 0)$, for small $b_3$. Clearly, $(X, [\om_0])$ is analytically $K$-semistable, and degenerates to $(X_0, [\om_0])$ via $\beta \mapsto (0, 0 , \beta\, b_3 ,0)$. The stabiliser of $b$ in the torus is the first factor $\C^*\times \lbrace 1 \rbrace$. 

Consider the class 
$$
[\om_\ep]:=[\om_0]-\ep ( E_{0,\infty}+ E_{\infty,\infty}).
$$
From our construction, it follows that $(X, [\om_\ep])$ will admit a cscK metric for all $\vert \ep\vert \ll 1$ if and only if the following two conditions are satisfied:
$$
\left\{ 
\begin{array}{ccc}
\mathrm{Fut}_{[\om_\ep]}(X_0, v_\alpha) & = & 0, \\
\mathrm{Fut}_{[\om_\ep]}(X_0, v_\beta) & > & 0,
\end{array}
\right.
$$
where $\mathrm{Fut}_{[\om_\ep]}$ stands for the Futaki invariant, $ v_\alpha$ is a generator for the $\C^*\times \lbrace 1 \rbrace$-action and $ v_\beta$ is a generator for the $\lbrace 1 \rbrace\times \C^*$-action. We can compute those invariants by using the polytope description of $(X_0,[\om_\ep])$, pictured in \cite[Figure 1 page 468]{ClTip}, for $\ep >0$. 

The vanishing of $\mathrm{Fut}_{[\om_\ep]}(X_0, v_\alpha)$ follows from the symmetry along the $x$-axis, as the affine function corresponding to the hamiltonian of $ v_\alpha$ is, up to scale, given by $(x,y)\mapsto x.$ On the other hand, a direct computation, using the toric description of the Futaki invariant (\cite[Lemma 3.2.9]{donaldson}), shows that $\mathrm{Fut}_{[\om_\ep]}(X_0, v_\beta)<0$ for $\ep >0$ and $\mathrm{Fut}_{[\om_\ep]}(X_0, v_\beta)>0$ for $\ep <0$. This provides an example of wall crossing for $(X, [\om_\ep])$ that is cscK for $\ep <0$, analytically $K$-semistable for $\ep=0$ and $K$-unstable for $\ep >0$ small enough. On the other hand, $(X_0, [\om_\ep])$ is cscK precisely at $\ep = 0$.

\appendix
\section{Proof of Proposition \ref{prop:sigma-eta-image-orbit}}
\label{sec:appendix}

\begin{proof}[Proof of Proposition \ref{prop:sigma-eta-image-orbit}]
First, recall from \cite[Theorem 3.2 and Section 7.1]{Dwivedi} that $B$ is chosen so that $\mu_{\uep}$ is given by
\begin{equation}
 \label{eq:expressionmuep}
\mu_{\uep}(b')=\mu_{\uep}(0)+\sum_i \vert\vert b_i'\vert\vert^2_{\uep} \cdot m_i,
\end{equation}
for some norm $\vert\vert \cdot \vert\vert_{\uep}$ that might depend on ${\uep}$. From Equation \eqref{eq:action}, this implies in particular that for all $b'\in \cZ$, $\mu_{\uep}(b')\in \Int(\sigma)$, the interior of $\sigma$. Similarly, if $b'$ is in the boundary of $\ocZ$, then its image $\mu_{\uep}(b')$ is in the boundary of $\sigma$. To see this, let $b'\in\ocZ$ be a boundary point, so that there is $v\in \t$ such that 
$$
b'=\lim_{t\mapsto 0} \sum_i t^{\langle m_i, v \rangle}b_{m_i}.
$$
As the limit exists, $v\in\sigma^\vee$. Then
$$
\mu_{\uep}(b')=\mu_{\uep}(0)+\sum_{\langle v , m_i\rangle=0} \vert\vert b_{m_i} \vert\vert_{\uep}^2 \cdot m_i.
$$
This is a boundary point of $\sigma$, unless $\lbrace m_i,\: \langle m_i, v \rangle =0\rbrace$ spans $\mathrm{Vect}(\sigma)=\t^*$. This is impossible, as this would imply $v\in (\t^*)^\perp=\lbrace 0 \rbrace$.

The proof is now done in $3$ steps.

{\bf Step 1 :  The restriction of $\mu_{\uep}$ to $\cZ$ is a submersion.} Let $b'\in \cZ$. From the fact that $b'$ has trivial stabiliser we deduce that the map 
$$
\begin{array}{ccc}
\t\oplus i\t &  \to  & T_{b'}\cZ\\
v + i w & \mapsto & \check v(b') + i \check w (b')
\end{array}
$$
is an isomorphism, where for $v\in\t$, $\check v$ denotes the associated vector field on $B$.  We then identify 
$$
T_{b'}\cZ \simeq \t \oplus i \t
$$
and 
$$
T_{\mu_{\uep}(b')}\t^*\simeq \t^*,
$$
so that the differential of $\mu_{\uep}$ restricted to the complexified orbit at $b'$ is a map 
$$
d\mu_{\uep}(b') : \t \oplus i\t \to \t^*
$$
given by
$$
d\mu_{\uep}(b')(v+iw)=\Om_{\uep}(b')((\check v+i\check w) (b'), \cdot ).
$$
From ($R_4$), the form $\Om_0(b')$ is non-degenerate on $\lbrace \check v+i\check w (b'), \;v+iw\in\t\oplus i\t \rbrace$. Up to shrinking $U$, we can assume that $\Om_{\uep}(b')( \cdot , \cdot)$ is non-degenerate on this space as well, so that $d\mu_{\uep}$ has maximal rank on $\cZ$. Then, $\mu_{\uep}$ restricted to $\cZ$ is a submersion, and thus in particular is an open map.

{\bf Step 2 : Each ray $\rho\in\sigma(1)$ intersects  $\mu_{\uep}(\ocZ)-\mu_{\uep}(0)$.} Indeed, such a ray $\rho$ can be written $\rho = \R_+\cdot m_i$ with $H_{m_i}\cap \sigma^\vee$ a facet of $\sigma^\vee$. Pick now $v\in H_{m_i}\cap \sigma^\vee$ such that $v$ doesn't belong to any of the proper faces of $H_{m_i}\cap \sigma^\vee$. Then, by the choice of $v$,
$$
\left\{
\begin{array}{cccc}
 \langle m_i, v \rangle & = & 0,& \\
 \langle m_j, v \rangle & \geq &  0, & j\neq i.
\end{array}
\right.
$$

Assume there is a $j\neq i$ such that $\langle m_j, v \rangle =0$. Then $v\in \sigma^\vee\cap H_{m_i}\cap H_{m_j}$. As we assumed that $v$ was not in a proper face of $\sigma^\vee\cap H_{m_i}$, necessarily $\sigma^\vee\cap H_{m_i}\cap H_{m_j}=\sigma^\vee\cap H_{m_i}$. As $\sigma^\vee$ is maximal dimensional, and as $\sigma^\vee\cap H_{m_i}$ is a facet of $\sigma^\vee$ (because $\R_+\cdot m_i$ is a ray), $m_j$ must be a positive multiple of $m_i$. Hence,
$$
\left\{
\begin{array}{ccccc}
 \langle m_j, v \rangle & = & 0 & \textrm{ if }& m_j\in \R_+\cdot m_i \\
 \langle m_j, v \rangle & > & 0 & \textrm{ if } & m_j\notin \R_+\cdot m_i.
\end{array}
\right.
$$
From the action (\ref{eq:action}) of $T^\C$ on $B$ at $b$, we have that the limit point under the action generated by $v$ starting from $b$ is
$$
\lim_{t\mapsto 0} \sum_i t^{\langle m_i, v \rangle}b_{m_i}=\sum_{m_j\in\R_+\cdot m_i} b_{m_j} \in \ocZ.
$$
But from (\ref{eq:expressionmuep}) the image of that point by $\mu_\ep$ is of the form 
$$
\mu_\ep(0)+\sum_{m_j\in\R_+\cdot m_i} \lambda_j m_j=\mu_\ep(0)+\lambda m_i
$$
for positive $\lambda_j$'s and $\lambda >0$. This proves the claim of Step $2$.

{\bf Step 3 :  Conclusion of the proof.} From Step $2$, $\mu_{\uep}(\ocZ)$ contains an element of the form $\mu_\ep(0)+\lambda_i m_i$, $\lambda_i >0$, for each ray $\R_+\cdot m_i$ of $\sigma$. It also contains $\mu_{\uep}(0)$ as $0\in \overline{T^\C\cdot b}$. Denote by $\Delta$ the convex hull of $\mu_\ep(0)$ and the points $(\mu_\ep(0)+\lambda_i m_i)_{\R_+\cdot m_i\in \sigma(1)}$. It is enough to show that $\Delta\subset \mu_{\uep}(\ocZ)$ to conclude, as for $\eta$ small enough, 
$\mu_{\uep}(0)+\sigma_{\eta}\subset\Delta$.

 To prove that $\Delta\subset \mu_{\uep}(\ocZ)$, it is enough, by convexity, to prove that for any $(x_1, x_2)\in \Delta\cap \mu_{\uep}(\ocZ)$, the segment $[x_1, x_2]$ lies in $\Delta\cap \mu_{\uep}(\ocZ)$. This can be done by showing that
$$
\lbrace t\in [0, 1],\: t x_1 + (1-t) x_2 \in \mu_{\uep}(\cZ) \rbrace
$$
is open and closed. Step $1$ implies that this set open. As $\Delta$ is closed, there is a closed ball $D$ such that $(\mu_{\uep})^{-1}(\Delta)\subset D \subset B$.  Note that for any sequence $(b_i)\in T^\C\cdot b\cap D$, we can extract a converging subsequence with limit in $\overline{T^\C\cdot b}\cap D \subset \ocZ$. A classical argument using extractions of convergent subsequences and continuity of $\mu_{\uep}$ then implies the desired closedness.
\end{proof}

\end{document}